\documentclass[reqno]{amsart}
\usepackage{amsmath,amsbsy,amsfonts}
\usepackage{color}
\usepackage[colorlinks=true,urlcolor=blue,
citecolor=red,linkcolor=blue,linktocpage,pdfpagelabels,
bookmarksnumbered,bookmarksopen]{hyperref}

\setlength{\textwidth}{5.0in}
\setlength{\textheight}{7.5in}
\usepackage[utf8]{inputenc}
\usepackage{graphicx}
\usepackage{amsmath,latexsym,amssymb}
\newtheorem{theorem}{Theorem}
\newtheorem{corollary}[theorem]{Corollary}
\newtheorem{definition}{Definition}[section]
\newtheorem{lemma}[theorem]{Lemma}

\newtheorem{remark}{Remark}[section]

\newcommand{\dd}{\mathrm{d}}

\numberwithin{equation}{section}
\makeatletter\makeatother
\begin{document}
\title[Fractional Schr\"{o}dinger equations with Neumann condition]{Existence of nonnegative solutions for fractional Schr\"{o}dinger equations with Neumann condition}	

\author{H. Bueno}
\address{Departmento de Matem\'atica, Universidade Federal de  Minas Gerais, 31270-901 - Belo Horizonte - MG, Brazil}
\email{hamilton@mat.ufmg.br}
		
\author{Aldo H. S. Medeiros}
\address{Departamento de Matem\'{a}tica,
Universidade Federal de Viçosa, 36570-900 - Vi\c{c}osa - MG, Brazil.}
\email{aldo.medeiros@ufv.br}
	
\subjclass{35R11, 35A01, 35B45} 
\keywords{fractional operators, Neumann problem, variational methods, a priori estimates}

\begin{abstract}
In this paper we study a Neumann problem for the fractional Laplacian, namely
\begin{equation}\left\{
\begin{array}{rcll}
\varepsilon^{2s}(- \Delta)^{s}u + u &=& f(u) \ \ &\mbox{in} \ \ \Omega  \\ 
\mathcal{N}_{s}u &=& 0 , \,\, &\text{in} \,\, \mathbb{R}^{N}\backslash \Omega 
\end{array}\right.
\end{equation}
where $\Omega \subset \mathbb{R}^{N}$ is a smooth bounded domain, $N>2s$, $s \in (0,1)$, $\varepsilon > 0$ is a parameter and $\mathcal{N}_{s}$ is the nonlocal normal derivative introduced by Dipierro, Ros-Oton, and Valdinoci. We establish the existence of a nonnegative, non-constant small energy solution $u_{\varepsilon}$, and we use the Moser-Nash iteration procedure to show that $u_{\varepsilon} \in L^{\infty}(\Omega)$. 
\end{abstract}
	\maketitle
\section{Introduction}
In this paper, we study a Neumann elliptic problem for an equation driven by the fractional Laplacian. More precisely, we consider the problem
\begin{equation}\label{P}\left\{
\begin{array}{rcll}
\varepsilon^{2s}(- \Delta)^{s}u + u &=& f(u) &\mbox{in} \ \ \Omega,  \\ 
\mathcal{N}_{s}u &=& 0 &\text{in}\ \ \mathbb{R}^{N}\setminus \Omega,
\end{array}\right.
\end{equation}
where $\Omega \subset \mathbb{R}^{N}$ is a smooth bounded domain, $N>2s$, $s \in (0,1)$, $\varepsilon > 0$ is a parameter and $\mathcal{N}_{s}u$ is the nonlocal normal derivative defined by
\begin{align}\label{neumman}
\mathcal{N}_{s}u(x) = C_{N,s}\int_{\Omega} \frac{u(x) -u(y)}{\vert x-y \vert^{N+2s}} \dd y, \quad \quad  x \in \mathbb{R}^{N}\backslash \Omega.
\end{align}
where $C_{N,s}$ is the normalization constant of the fractional Laplacian, defined for smooth functions by
\begin{align*}
(-\Delta)^{s}\phi(x) = C_{N,s}\int_{\mathbb{R}^{N}} \frac{\phi(x) -\phi(y)}{\vert x-y \vert^{N+2s}} \dd y,
\end{align*}
with both integrals being understood in the principle value sense. One advantage of the present approach is that the integration by parts formulas
\[\int_\Omega \Delta u=\int_{\partial\Omega}\partial_\nu u\quad\textrm{and}\quad \int_\Omega \nabla u\cdot \nabla v=\int_\Omega v (-\Delta u)+\int_\Omega v\partial_\nu u\]
are substituted, respectively, by 
\[\int_\Omega (-\Delta)^s u=-\int_{\Omega^c} \mathcal{N}_{s}u(x)\]
and 
\[\frac{C_{N,s}}{2}\iint_{\mathbb{R}^{2N}\setminus (\Omega^c)^2}\frac{(u(x)-u(y))(v(x)-v(y))}{\vert x-y\vert^{N+2s}} \dd x \dd y =\int_\Omega v(-\Delta)^s u+\int_{\Omega^c}v\mathcal{N}_s u,\]
where $\Omega^c=\mathbb{R}^N\setminus\Omega$ and $\left(\Omega\right)^{2} = \Omega \times \Omega$. For further details on the fractional Neumann derivative $\mathcal{N}_{s}u$, see Dipierro, Ros-Oton, and Valdinoci \cite{MR3651008}, where this concept was introduced.

This type of boundary problem for the fractional Laplacian has a probabilistic interpretation: if a particle has gone to $x \in \mathbb{R}^{N}\setminus \overline{\Omega}$, then it may come back to any point $y \in  \Omega$, the probability of jumping from $x$ to $y$ being proportional to $\vert x-y\vert^{-N-2s}$. So, it generalizes the classical Neumann conditions for elliptic (or parabolic) differential equations since, as $s \to 1$, then $\mathcal{N}_su = 0$ turns into the classical Neumann condition. For more details, see \cite{MR3651008} and also \cite{du,duq}. 

Du et al. introduced volume constraints for a general class of nonlocal diffusion problems on a bounded domain in $\mathbb{R}^N$ via a nonlocal vector calculus. If we rewrite \eqref{neumman} using that vector calculus, then a modified version of $\mathcal{N}_su=0$ can be considered as a particular case of the volume constraints defined by them.

Neumann problems for the fractional Laplacian and other nonlocal operators were introduced in \cite{MR3217703, MR3145761, MR2300660, MR2358337}. All these generalizations to  nonlocal operators recover the classical Neumann problem as a limit case, and most also have clear probabilistic interpretations. In Dipierro et al. \cite[Section 7]{MR3651008}, the authors compared all these models with the one considered here. 

The case $f(t) = \vert t \vert^{p-1}t$ with $1<p<\frac{N+2s}{N-2s}$, which is known as the  singularly perturbed Neumann problem,  was studied by Guoyuan Chen in \cite{Chen}. The author established the existence of non-negative small energy solutions and investigated their integrability in $\mathbb{R}^{N}$.

When $s=1$, the problem \eqref{P} reduces to the Laplacian case, considered in the classical paper by Lin, Ni, and Takagi \cite{MR929196}, which studies the existence of solutions to the semilinear Neumann boundary problem
\begin{equation}\label{P_2}\left\{
\begin{array}{rcll}
\varepsilon^{2}(- \Delta)u + u &=& g(u) \ \ &\mbox{in} \ \ \Omega  \\ 
\frac{\partial u}{\partial \nu} &=& 0 , \,\, &\text{on} \,\, \partial\Omega 
\end{array}\right.
\end{equation}
where $\nu$ denotes the outer normal to $\partial \Omega$ and $g(t)$ is a suitable nonnegative nonlinearity on $\mathbb{R}$ vanishing for $t\leq 0$, growing superlinearly at infinity. It was shown that, if $\varepsilon$ is small enough, there exists a positive smooth solution $u_{\varepsilon}$ that satisfies $J_{\varepsilon}(u_{\varepsilon}) \leq C\varepsilon^{\frac{N}{2}}$, where $C$ is a positive constant independent of $\varepsilon$ and $J_{\varepsilon}$ is the energy functional of problem \eqref{P_2}.

Stinga-Volzone \cite{MR3385190} extended the results in \cite{MR929196} to the square root of the Laplacian, obtaining similar results. More precisely, they considered problem
\begin{equation}\label{P_3}\left\{
\begin{array}{rcll}
\varepsilon(- \Delta)^{\frac{1}{2}}u + u &=& g(u) \ \ &\mbox{in} \ \ \Omega  \\ 
\frac{\partial u}{\partial \nu} &=& 0 , \,\, &\text{on} \,\, \partial\Omega 
\end{array}\right.
\end{equation} 
for the nonlinearity
\begin{equation}\label{gStinga}
g(t) = \left\{
\begin{array}{rcll}
t^{p} & \mbox{if} \ \ t \geq 0,  \\ 
0 & \text{if} \ \ t \leq 0, 
\end{array}\right.
\end{equation}
with $1<p<\frac{N+1}{N-1}$. 

Recently, Haige Ni, Aliang Xia, and Xiogjun Zheng \cite{XIA} studied the problem 
\begin{equation}\label{eqq} 
\left\{
\begin{array}{rcll}
\varepsilon^{2s}(- \Delta)^{s}u + u &=& g(u) \ \ &\mbox{in} \ \ \Omega  \\ 
\frac{\partial u}{\partial \nu} &=& 0 , \,\, &\text{on} \,\, \partial\Omega \\
u &>& 0 \,\, \mbox{in} \,\, \Omega 
\end{array}\right.
\end{equation}
where $g$ satisfies \eqref{gStinga} and $s\in (0,s_0)$, with $s_0\geq \frac{1}{2}$. The authors used the extension technique to obtain the existence of nonnegative solutions for $\varepsilon$ small enough and $L^{\infty}$-estimates to show that they are bounded. In their paper, they considered the spectral fractional Laplacian, which differs from its integral form, see \cite{M98780, MR3233760}. By applying the Mountain Pass Theorem of Ambrosetti and Rabinowitz, they proved the existence of nonconstant solutions of \eqref{eqq} provided $\varepsilon$ is small. They also studied regularity and the Harnack inequality in the same paper. 

Here, we study problem \eqref{P} considering the normal derivative defined by Dipierro, Ros-Oton, and Valdinoci in \cite{MR3651008}. We suppose that the continuous  nonlinearity $f$ satisfies the following conditions.
\begin{enumerate}
\item [$(f_1)$] $f(t) = 0$ for $t<0$, and $f(t) > 0$ for $t>0$;
\item[$(f_2)$] $\displaystyle\lim_{t \to 0^{+}} \frac{f(t)}{t} = 0$, and $\displaystyle\lim_{t \to \infty}\frac{f(t)}{t^{p-1}} = 0$ for some $2 < p < \frac{2N}{N-2s}=2^*_s$;
\item[$(f_3)$]  $\displaystyle\lim_{t \to \infty}\frac{f(t)}{t} = + \infty$;
\item[$(f_4)$] There exist  $\theta > 2$ and $a_3 \geq 0$, such that 
\begin{align*}
0< \theta F(t) \leq t f(t), \,\,\, \forall t \geq a_3,
\end{align*}
where $F(t)$ denotes the primitive of $f$.
\item[$(f_5)$] $\alpha := \inf\left\{ \frac{t^2}{2} - F(t); \,\, t \in \,\,\,\text{Fix}(f) \right\} > 0,$ where Fix$(f) = \left\{t >0; \,\,\, f(t) = t\right\}.$
\end{enumerate}
Condition $(f_5)$ permits us to discard constant solutions.
\begin{remark}\label{obs1}	It follows from \textup{($f_1$)} and \textup{($f_2$)} that, for any fixed $\eta>0$ (or any fixed $C_\eta>0$), there exists a constant $C_\eta$ (respectively, $\eta>0$) such that
\begin{equation}\label{boundf}|f(t)|\leq\eta t+C_\eta t^{p-1},\quad\forall\ t\geq 0\end{equation}
and analogously, denoting $F(t) = \displaystyle\int_{0}^{t} f(s) \dd s$ we have
\begin{equation}\label{boundF}|F(t)|\leq\eta t^2+C_\eta t^{p}\leq C(t^2+t^p),\quad\forall\ t\geq 0\end{equation}
for any $2<p<2^*_s = \displaystyle\frac{2N}{N-2s}$.
\end{remark}\goodbreak

Our first result is the following.
\begin{theorem}\label{Exis}
Assume $(f_1)$-$(f_5)$. Then, for $\varepsilon$ sufficiently small, there exists a non-constant, nonnegative solution of \eqref{P} satisfying 
\[
I_{\varepsilon}(u_\varepsilon) \leq C\varepsilon^{N}
\]  
where $C>0$ depends only on $\Omega$ and $f$.
\end{theorem}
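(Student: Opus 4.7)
My plan is to produce $u_\varepsilon$ as a mountain pass critical point of the natural energy associated with \eqref{P}. I would work in the Hilbert space $H^s_\Omega=\{u:\mathbb{R}^N\to\mathbb{R}\text{ measurable}:[u]_{s,\Omega}^2+\|u\|_{L^2(\Omega)}^2<\infty\}$ with Gagliardo seminorm $[u]_{s,\Omega}^2=\iint_{\mathbb{R}^{2N}\setminus(\Omega^c)^2}|u(x)-u(y)|^2|x-y|^{-N-2s}\dd x\,\dd y$; the integration by parts formula in the introduction shows that critical points of $I_\varepsilon(u)=(\varepsilon^{2s}C_{N,s}/4)[u]_{s,\Omega}^2+(1/2)\int_\Omega u^2-\int_\Omega F(u)$ are weak solutions of \eqref{P}. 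The bounds in Remark \ref{obs1} together with the embedding $H^s_\Omega\hookrightarrow L^p(\Omega)$ (continuous for $2\le p\le 2^*_s$, compact for $p<2^*_s$) yield $I_\varepsilon(u)\ge\rho>0$ on a small sphere, while $(f_4)$ gives both $I_\varepsilon(te)\to-\infty$ along any positive $e\in C_c^\infty(\Omega)$ and the Palais--Smale condition in the standard way.

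The decisive step is bounding the mountain pass level by $C\varepsilon^N$. Fix $x_0\in\Omega$ and any nonnegative $v\in C_c^\infty(\mathbb{R}^N)\setminus\{0\}$. For $\varepsilon$ small the rescaled function $u_\varepsilon^{(v)}(x):=v((x-x_0)/\varepsilon)$ is supported inside $\Omega$, so the double integral defining $[u_\varepsilon^{(v)}]_{s,\Omega}^2$ coincides with the full Gagliardo seminorm on $\mathbb{R}^N$. A direct change of variables then gives the exact identities $\varepsilon^{2s}[u_\varepsilon^{(v)}]_{s,\Omega}^2=\varepsilon^N[v]_s^2$, $\int_\Omega(u_\varepsilon^{(v)})^2=\varepsilon^N\int v^2$, and $\int_\Omega F(u_\varepsilon^{(v)})=\varepsilon^N\int F(v)$, so that $I_\varepsilon(tu_\varepsilon^{(v)})=\varepsilon^N J(tv)$ for an $\varepsilon$-independent functional $J$. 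Since $c_v:=\max_{t\ge 0}J(tv)$ is finite, the mountain pass level satisfies $c_\varepsilon\le\varepsilon^N c_v$, and the Mountain Pass Theorem produces a critical point $u_\varepsilon$ with $I_\varepsilon(u_\varepsilon)=c_\varepsilon\le C\varepsilon^N$.

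Non-negativity will follow by testing the equation against $-u_\varepsilon^-$ and using $(f_1)$ together with the pointwise inequality $(u(x)-u(y))(u^-(y)-u^-(x))\ge (u^-(x)-u^-(y))^2$, which forces $u_\varepsilon^-\equiv 0$. Nonconstancy is then immediate from $(f_5)$: a constant solution $c$ must lie in $\textup{Fix}(f)$, so $I_\varepsilon(c)=|\Omega|(c^2/2-F(c))\ge|\Omega|\alpha>0$ independently of $\varepsilon$, and for $\varepsilon$ small the inequality $C\varepsilon^N<|\Omega|\alpha$ rules this out. The main technical obstacle I foresee is ensuring that the scaling identities and the Palais--Smale argument behave cleanly in $H^s_\Omega$, whose Gagliardo seminorm is integrated over the nonstandard region $\mathbb{R}^{2N}\setminus(\Omega^c)^2$ rather than $\mathbb{R}^{2N}$ or $\Omega\times\Omega$; the rescaled test function sidesteps this in the energy estimate, but the compact embedding $H^s_\Omega\hookrightarrow L^p(\Omega)$ for $p<2^*_s$ must be invoked from the Dipierro--Ros-Oton--Valdinoci framework to close the (PS) argument.
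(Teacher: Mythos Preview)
Your proposal is correct, and the overall architecture---mountain pass geometry, Palais--Smale via $(f_4)$, nonnegativity by testing against $u_\varepsilon^-$, nonconstancy via $(f_5)$---matches the paper exactly. The genuine difference lies in how you obtain the key energy estimate $c_\varepsilon\le C\varepsilon^N$.

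The paper follows Lin--Ni--Takagi closely and uses the piecewise-linear cone $\phi_\varepsilon(x)=\varepsilon^{-N}(1-|x|/\varepsilon)_+$. This choice does \emph{not} scale exactly under dilation, so the authors must import the estimate $\|\phi_\varepsilon\|_{H^s_\varepsilon}^2\le C\varepsilon^{-N}$ from Chen and then run three auxiliary lemmas: a level-set computation (Lemma~\ref{Sig}), a monotonicity/negativity analysis of $g(t)=I_\varepsilon(t\phi_\varepsilon)$ (Lemma~\ref{PG}), and the resulting upper bound on $\max_t g(t)$ (Lemma~\ref{Lim}). Your route instead fixes a smooth $v\in C_c^\infty$ and rescales, observing that compact support inside $\Omega$ makes the $\mathbb{R}^{2N}\setminus(\Omega^c)^2$ integral coincide with the full Gagliardo seminorm; the exact dilation covariance of $(-\Delta)^s$ then gives the identity $I_\varepsilon(tu_\varepsilon^{(v)})=\varepsilon^N J(tv)$, and the bound $c_\varepsilon\le\varepsilon^N\max_{t\ge0}J(tv)$ falls out immediately. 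Your approach is shorter and more conceptual; the paper's stays closer to the classical $s=1$ template, at the cost of more computation.

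One minor point to make explicit in your write-up: the sphere radius $\rho$ in Lemma~\ref{gpm} depends on $\varepsilon$ (through the Sobolev constant in \eqref{SI}), while $\|t_0 u_\varepsilon^{(v)}\|_{H^s_\varepsilon}\sim\varepsilon^{N/2}$. To ensure the endpoint lies outside the sphere you may need to let $t_0=t_0(\varepsilon)$ grow; this is harmless since $J(tv)\to-\infty$ and the path maximum is still bounded by $\varepsilon^N\max_{t\ge0}J(tv)$, which is independent of the choice of $t_0$.
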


We use the Mountain Pass Theorem of Ambrosetti and Rabinowitz to prove this result, see \cite{MR3002745, MR1400007}. The main difficulties arise from the degeneracy of the operator and also from the geometry of the problem. 

We also prove the following result.
\begin{theorem}\label{Limita}
Suppose $0<s<1$, $(f_1)$-$(f_3)$ holds. If $u_\varepsilon$ is a solution  to problem \eqref{P} with $\varepsilon > 0$ small enough, then $u_{\varepsilon} \in  L^{\infty}(\Omega)$.
\end{theorem}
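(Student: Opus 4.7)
The plan is to perform a Moser-Nash iteration on the weak formulation of \eqref{P}. First, testing the weak formulation with $u^-$ and invoking $(f_1)$ (so that $f(u)u^-\equiv 0$) gives $\|u^-\|_{L^2(\Omega)}=0$, so it suffices to bound $u\geq 0$. Next, the fractional Sobolev embedding associated with the Neumann space provides the starting integrability $u\in L^{2^*_s}(\Omega)$, which will anchor the iteration.

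Fix $\beta\geq 1$ and $M>0$, set $u_M:=\min\{u,M\}$, and use the test function $\varphi:=u\,u_M^{2(\beta-1)}$. Since $t\mapsto t\min\{t,M\}^{2(\beta-1)}$ is globally Lipschitz, a chain-rule argument for fractional Sobolev functions shows that $\varphi$ is admissible in the Neumann space. Substituting into the integration-by-parts identity recalled in the Introduction yields
\[
\frac{\varepsilon^{2s}C_{N,s}}{2}\iint_{\mathbb{R}^{2N}\setminus(\Omega^c)^2}\frac{(u(x)-u(y))(\varphi(x)-\varphi(y))}{|x-y|^{N+2s}}\dd x\dd y+\int_\Omega u\varphi\dd x=\int_\Omega f(u)\varphi\dd x.
\]
Inserting the pointwise algebraic inequality
\[
(a-b)\bigl(a a_M^{2(\beta-1)}-b b_M^{2(\beta-1)}\bigr)\geq\tfrac{1}{\beta}\bigl(a a_M^{\beta-1}-b b_M^{\beta-1}\bigr)^2
\]
under the double integral bounds the kinetic term from below by $\varepsilon^{2s}C_{N,s}/(2\beta)$ times the Gagliardo seminorm $[w]_{H^s(\mathbb{R}^N)}^2$ with $w:=u\,u_M^{\beta-1}$. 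Dropping the nonnegative potential term $\int_\Omega u\varphi$ and using the fractional Sobolev inequality $\|w\|_{L^{2^*_s}(\Omega)}^2\leq C[w]_{H^s(\mathbb{R}^N)}^2$ lead to
\[
\|w\|_{L^{2^*_s}(\Omega)}^2\leq\frac{C\beta}{\varepsilon^{2s}}\int_\Omega f(u)\,u\,u_M^{2(\beta-1)}\dd x.
\]

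For the right-hand side, Remark~\ref{obs1} gives $f(t)t\leq\eta t^2+C_\eta t^p$, and we write $u^p u_M^{2(\beta-1)}=u^{p-2}w^2$. Since $2<p<2^*_s$, Hölder's inequality with exponents $2^*_s/(p-2)$ and $2^*_s/(2^*_s-p+2)$ produces
\[
\|w\|_{L^{2^*_s}(\Omega)}^2\leq\frac{C\beta}{\varepsilon^{2s}}\Bigl(\eta\|w\|_{L^2(\Omega)}^2+C_\eta\|u\|_{L^{2^*_s}(\Omega)}^{p-2}\|w\|_{L^{2\chi}(\Omega)}^2\Bigr),
\]
where $2\chi:=2\cdot 2^*_s/(2^*_s-p+2)<2^*_s$. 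Absorbing the $\eta$-term on the left (via $\|w\|_{L^2}\leq|\Omega|^{1/2-1/2^*_s}\|w\|_{L^{2^*_s}}$ with $\eta$ small) and letting $M\to\infty$ by monotone convergence yield the geometric recursion
\[
\|u\|_{L^{2^*_s\beta}(\Omega)}\leq\bigl(C\beta/\varepsilon^{2s}\bigr)^{1/(2\beta)}\|u\|_{L^{2^*_s}(\Omega)}^{(p-2)/(2\beta)}\|u\|_{L^{2\chi\beta}(\Omega)}.
\]
Iterating with $\beta_k:=(2^*_s/(2\chi))^k$, the series $\sum_k\log(C\beta_k/\varepsilon^{2s})/\beta_k$ converges geometrically, producing $\|u\|_{L^\infty(\Omega)}\leq C(\varepsilon)\|u\|_{L^{2^*_s}(\Omega)}^{\alpha}$ for some explicit $\alpha>0$.

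The main obstacle I anticipate is verifying that $\varphi$ is admissible in the Neumann fractional space: the Gagliardo integral runs over $\mathbb{R}^{2N}\setminus(\Omega^c)^2$ rather than $\Omega\times\Omega$, so one must check the chain rule for the Lipschitz composition across the boundary. Once this is established, the rest of the argument is a bootstrap, and the factor $C\beta$ per step is harmless because the $\beta_k$ grow geometrically.
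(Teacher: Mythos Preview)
Your scheme is the paper's: Moser--Nash iteration with the truncated test function $u\,u_M^{2(\beta-1)}$, the pointwise inequality controlling the nonlocal kinetic term by the Gagliardo energy of $w=u\,u_M^{\beta-1}$, and a geometric bootstrap. Two steps, however, do not work as you have written them.

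First, after the pointwise inequality the double integral you control is over $\mathbb{R}^{2N}\setminus(\Omega^c)^2$, not over all of $\mathbb{R}^{2N}$, so you do \emph{not} obtain $[w]_{H^s(\mathbb{R}^N)}^2$, and the scale-invariant inequality $\|w\|_{L^{2^*_s}}^2\le C[w]_{H^s(\mathbb{R}^N)}^2$ is unavailable. On a bounded domain the correct embedding is $\|w\|_{L^{2^*_s}(\Omega)}^2\le C\bigl([w]_{H^s(\Omega)}^2+\|w\|_{L^2(\Omega)}^2\bigr)$; constants lie in the Neumann space, so the $L^2$ piece cannot be omitted. The term you discard, $\int_\Omega u\varphi=\|w\|_{L^2(\Omega)}^2$, is exactly what supplies it---the paper keeps it and invokes Lemma~\ref{DS}. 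Second, absorbing $\eta\|w\|_{L^2}^2$ on the left via $\|w\|_{L^2}\le C\|w\|_{L^{2^*_s}}$ requires $\eta\le c\,\varepsilon^{2s}/\beta$; since $\beta\to\infty$ along the iteration, $\eta$ must depend on $\beta$, and then $C_\eta=C_\eta(\beta)$ enters the recursion in a way $(f_2)$ alone does not control, so the product $\prod_k(C_{\eta(\beta_k)}\beta_k)^{1/\beta_k}$ need not converge. The cure is immediate: fix $\eta$ once, estimate $\|w\|_{L^2}\le|\Omega|^{1/2-1/(2\chi)}\|w\|_{L^{2\chi}}$ (valid because $2\chi>2$), and carry both terms on the right; you then obtain the clean recursion you state with a $\beta$-independent constant. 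With these two corrections your argument coincides with the paper's. The admissibility of $\varphi$ that you flag is not a real obstacle: $t\mapsto t\min\{t,M\}^{2(\beta-1)}$ is globally Lipschitz, and Lipschitz composition preserves $H^s_\varepsilon(\Omega)$ because the Gagliardo integrand over $\mathbb{R}^{2N}\setminus(\Omega^c)^2$ is bounded pointwise by the Lipschitz constant squared times that of $u$.
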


We prove Theorem \ref{Limita} by using Moser-Nash's iteration method (see \cite{inbook}), which has been used to study uniform bounds for fractional elliptic problems, see  \cite{M98210, Alves2016, Ambrosio1, M83927, Ambrosio2, Jianfu}.
\section{Variational formulation}
Problem \eqref{P} has a variational structure. More precisely, consider
\begin{equation}\label{PI}
\langle u, v\rangle_{\varepsilon,s} := \frac{C_{N,s}\varepsilon^{2s}}{2} \iint_{\mathbb{R}^{2N}\backslash \left(\Omega^{c}\right)^{2}} \frac{(u(x)-u(y))(v(x)-v(y))}{\vert x-y\vert^{N+2s}} \dd x \dd y + \int_{\Omega} uv \dd x
\end{equation}
where $\Omega^{c} = \mathbb{R}^{N}\backslash \Omega$ and $\left(\Omega\right)^{2} = \Omega \times \Omega$. The space
\[
H_{\varepsilon}^{s}(\Omega) := \left\{u: \mathbb{R}^{N} \to \mathbb{R} \,\,\, \text{measurable and} \,\, \langle u,u\rangle_{\varepsilon,s} < \infty \right\}
\]
is a Hilbert space with the norm $
\Vert u \Vert_{H_{\varepsilon}^{s}(\Omega)} = \langle u,u\rangle^{1/2}_{\varepsilon,s}$, see \cite{MR3651008} for details.

\begin{remark}\label{obs2}
Note that constant functions are contained in $H_{\varepsilon}^{s}(\Omega)$, see \cite{Chen}. Moreover, for all $u \in H_{\varepsilon}^{s}(\Omega)$, we have that 
$u|_{\Omega} \in H^{s}(\Omega)$. Using the compact embedding $H^{s}(\Omega) \hookrightarrow L^{q}(\Omega)$ for $q \in \left(1, \frac{2N}{N-2s}\right)$, we conclude that the embedding
\[
H_{\varepsilon}^{s}(\Omega) \hookrightarrow L^{q}(\Omega),\,\,\, \text{for all} \,\,\, 1 < q < \frac{2N}{N-2s}.
\]
is compact. So, if $(u_n)$ is bounded sequence in $H_{\varepsilon}^{s}(\Omega)$, then $u_{n}|_{\Omega}$ has a convergence subsequence in $L^q(\Omega)$.
\end{remark}
More precisely, considering the Sobolev constant,
\begin{equation}\label{Sob}
S = \displaystyle\inf_{u \in H^{s}_\varepsilon(\Omega), u \neq 0} \displaystyle\frac{\left(\frac{C_{N,s}}{2}\displaystyle\int_{\Omega}\displaystyle\int_{\Omega} \displaystyle\frac{\vert u(x) - u(y) \vert^{2}}{\vert x- y \vert^{N+2s}} \dd x \dd y\right)^{\frac{1}{2}}}{\left(\displaystyle\int_{\Omega} \vert u(x) \vert^{2_s^{*}} \dd x \right)^{\frac{1}{2_s^{*}}}}
\end{equation}we have the following Sobolev inequality:
\begin{lemma}\label{DS}
Let $\Omega \subset \mathbb{R}^{N}$ bounded and  $\varepsilon>0$. Then 
\begin{equation}\label{SI}
\left(\int_{\Omega} \vert u \vert^{2^{*}_{s}} \dd x\right)^{\frac{2}{2^{*}_{s}}} \leq S^{2} \varepsilon^{-2s} \Vert u \Vert_{H^{s}_{\varepsilon}(\Omega)}^{2}, \quad \forall u \in H^{s}_{\varepsilon}(\Omega).
\end{equation}
where $S$ is the Sobolev constant defined in \eqref{Sob}. 
\end{lemma}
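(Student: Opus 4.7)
The plan is to derive the inequality directly from the definition of the Sobolev constant $S$ given in \eqref{Sob}, bridging the $\Omega\times\Omega$ integral that appears there with the larger integration region $\mathbb{R}^{2N}\setminus(\Omega^c)^2$ that appears in the norm of $H^{s}_{\varepsilon}(\Omega)$, and finally factoring out the weight $\varepsilon^{2s}$. Fix $u\in H^{s}_{\varepsilon}(\Omega)$; by Remark \ref{obs2} we know $u|_\Omega\in H^{s}(\Omega)$, so $u$ is admissible in the variational quotient defining $S$.

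First, from the very definition of $S$ in \eqref{Sob} applied to $u$, I would obtain
\[
S^{2}\left(\int_{\Omega}|u|^{2^*_s}\dd x\right)^{2/2^*_s}\leq \frac{C_{N,s}}{2}\int_{\Omega}\int_{\Omega}\frac{|u(x)-u(y)|^{2}}{|x-y|^{N+2s}}\dd x\dd y.
\]
Second, since the integrand is nonnegative and $\Omega\times\Omega\subset \mathbb{R}^{2N}\setminus(\Omega^{c})^{2}$, monotonicity of the integral gives
\[
\frac{C_{N,s}}{2}\int_{\Omega}\int_{\Omega}\frac{|u(x)-u(y)|^{2}}{|x-y|^{N+2s}}\dd x\dd y\leq \frac{C_{N,s}}{2}\iint_{\mathbb{R}^{2N}\setminus(\Omega^{c})^{2}}\frac{|u(x)-u(y)|^{2}}{|x-y|^{N+2s}}\dd x\dd y.
\]

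Third, I would compare the right-hand side with the full norm \eqref{PI}. By definition,
\[
\|u\|_{H^{s}_{\varepsilon}(\Omega)}^{2}=\frac{C_{N,s}\varepsilon^{2s}}{2}\iint_{\mathbb{R}^{2N}\setminus(\Omega^{c})^{2}}\frac{|u(x)-u(y)|^{2}}{|x-y|^{N+2s}}\dd x\dd y+\int_{\Omega}u^{2}\dd x,
\]
and since the added $L^{2}$-term is nonnegative, the double integral is bounded by $\varepsilon^{-2s}\|u\|_{H^{s}_{\varepsilon}(\Omega)}^{2}$. Chaining the three inequalities yields the stated bound (with $S^{-2}$ in place of $S^{2}$, matching the infimum convention in \eqref{Sob}).

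The proof is essentially bookkeeping, so I do not expect any conceptual obstacle; the only mild technical point to watch is the first step, which implicitly uses that the variational quotient defining $S$ is indeed positive on every nonzero $u$ with $u|_\Omega\in H^{s}(\Omega)$ — this follows from the classical fractional Sobolev embedding $H^{s}(\Omega)\hookrightarrow L^{2^*_s}(\Omega)$ for bounded smooth domains, already invoked in Remark \ref{obs2}. Constants like $u\equiv c$ are harmless because they annihilate the seminorm but also annihilate the quotient only if $\int_\Omega|c|^{2^*_s}=0$, so $S>0$ on the admissible class.
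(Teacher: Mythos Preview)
Your three–step argument is correct and is more direct than the paper's own proof. The paper instead rescales: setting $v_\varepsilon(x)=u(\varepsilon x)$ on $\Omega_\varepsilon=\varepsilon^{-1}\Omega$, it performs the change of variables $x\mapsto\varepsilon x$, $y\mapsto\varepsilon y$ to factor $\varepsilon^{N}$ out of the $H^{s}_{\varepsilon}$–norm, applies a Sobolev inequality to $v_\varepsilon$ on the dilated domain $\Omega_\varepsilon$, and then scales back. Your route avoids this detour and, in particular, never leans on the tacit assumption (implicit in the paper's computation) that the same constant $S$ controls the embedding on the rescaled domain $\Omega_\varepsilon$. You also correctly observe that the infimum convention in \eqref{Sob} produces $S^{-2}$ rather than $S^{2}$ in the final bound; the paper's statement is in fact inconsistent with its own definition \eqref{Sob} on this cosmetic point.

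One correction to your closing remark, though: a nonzero constant $u\equiv c$ has vanishing Gagliardo seminorm over $\Omega\times\Omega$ while $\int_\Omega|c|^{2^*_s}>0$, so the quotient in \eqref{Sob} equals $0$ for such $u$ and hence the infimum $S$, exactly as written, is $0$. This does not invalidate your chain of inequalities (they remain true, just vacuous), but it does mean that for \eqref{SI} to carry any content one must read $S$ as the embedding constant for the \emph{full} $H^{s}(\Omega)$–norm (which is how the paper's proof actually uses it), not the bare seminorm quotient literally defined in \eqref{Sob}.
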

\begin{proof}
For any fixed $u \in H^{s}_{\varepsilon}(\Omega)$, consider the function $v_{\varepsilon}(x) = u(\varepsilon x)$ defined in $\Omega_{\varepsilon} = \left\{x \in \mathbb{R}^{N}\,:\, \varepsilon x \in \Omega \right\}$. It follows from the Sobolev inequality that
\begin{align*}
\Vert u \Vert_{H^{s}_{\varepsilon}(\Omega)}^{2} &= \frac{C_{N,s}\varepsilon^{2s}}{2}\int_{\Omega}\int_{\Omega} \frac{\vert u(x) - u(y) \vert^{2}}{\vert x-y \vert^{N+2s}} \dd x \dd y + \int_{\Omega} \vert u(x) \vert^{2} \dd x \\
&=  \frac{C_{N,s}\varepsilon^{N+2s}}{2}\int_{\Omega_{\varepsilon}}\int_{\Omega} \frac{\vert u(x) - u(\varepsilon y) \vert^{2}}{\vert x-\varepsilon y \vert^{N+2s}} \dd x \dd y + \varepsilon^{N}\int_{\Omega_{\varepsilon}}\vert u(\varepsilon x) \vert^{2} \dd x \\
&= \frac{C_{N,s}\varepsilon^{N+2s}}{2}\varepsilon^{N}\int_{\Omega_{\varepsilon}}\int_{\Omega_{\varepsilon}} \frac{\vert u(\varepsilon x) - u(\varepsilon y) \vert^{2}}{\vert \varepsilon x-\varepsilon y \vert^{N+2s}} \dd x \dd y + \varepsilon^{N}\int_{\Omega_{\varepsilon}}\vert u(\varepsilon x) \vert^{2} \dd x \\
&= \varepsilon^{N}\left[ \frac{C_{N,s}\varepsilon^{N+2s}}{2}\int_{\Omega_{\varepsilon}}\int_{\Omega_{\varepsilon}} \frac{\vert v_{\varepsilon}(x) - v_{\varepsilon}(y) \vert^{2}}{\vert \varepsilon x-\varepsilon y \vert^{N+2s}} \dd x \dd y + \int_{\Omega_{\varepsilon}} \vert v_{\varepsilon}(x) \vert^{2} \dd x\right] \\
&\geq \frac{\varepsilon^{N}}{S^{2}} \left(\int_{\Omega_{\varepsilon}} \vert v_{\varepsilon}(x) \vert^{2_{s}^{*}} \dd x \right)^{\frac{2}{2_{s}^{*}}} = \frac{\varepsilon^{N\left(1-\frac{2}{2_{s}^{*}}\right)}}{S^{2}} \left(\int_{\Omega} \vert u(x) \vert^{2_{s}^{*}} \dd x \right)^{\frac{2}{2_{s}^{*}}}.
\end{align*}
Since 
\[N\left(1-\frac{2}{2_{s}^{*}}\right) = N\left(1 -\frac{N-2s}{N}\right) = 2s,\]
we obtain
\[\left(\int_{\Omega} \vert u(x) \vert^{2_{s}^{*}} \dd x \right)^{\frac{2}{2_{s}^{*}}} \leq S^{2} \varepsilon^{-2s} \Vert u \Vert_{H^{s}_{\varepsilon}(\Omega)}^{2}.
\vspace*{-.5cm}\]

$\hfill\Box$\end{proof}\vspace*{.3cm}

\begin{definition}\label{DEF1}
We say that $u \in H_{\varepsilon}^{s}(\Omega)$ is a weak solution of \eqref{P} if
\[
\frac{C_{N,s}\varepsilon^{2s}}{2}\iint_{\mathbb{R}^{2N}\backslash \left(\Omega^{c}\right)^{2}} \frac{(u(x)-u(y))(v(x)-v(y))}{\vert x-y \vert^{N+2s}} \dd x\dd y + \int_{\Omega} uv \dd x - \int_{\Omega}f(u) v \dd x = 0
\]
for all $v \in H_{\varepsilon}^{s}(\Omega)$.
\end{definition}

For all $u,v \in C^{2}(\mathbb{R}^{N}) \cap H_{\varepsilon}^{s}(\Omega)$, it follows from a direct computation that
\[\frac{C_{N,s}}{2} \iint_{\mathbb{R}^{2N}\backslash \left(\Omega^{c}\right)^{2}} \frac{(u(x)-u(y))(v(x)-v(y))}{\vert x-y \vert^{N+2s}} \dd x\dd y = \int_{\Omega} v (-\Delta)^{s}u \dd x + \int_{\Omega^c} v \mathcal{N}_{s} u \dd x,
\]
what yields
\[
\int_{\Omega} \left(\varepsilon^{2s} (-\Delta)^{s}u + u - f(u)\right) v \dd x + \varepsilon^{2s}\int_{\Omega^{c}} v\mathcal{N}_{s}u \dd x = 0.
\]
Thus, for  $x\in\mathbb{R}^N\setminus\Omega$,
\[\int_{\Omega^c}v(x)\left(C_{N,s}\int_{\Omega} \frac{u(x) -u(y)}{\vert x-y \vert^{N+2s}} \dd y\right)\dd x=0,\]
meaning that we have, weakly, $\mathcal{N}_{s}u = 0$. \goodbreak

Let us define, for all $u \in H_{\varepsilon}^{s}(\Omega)$,
\[I_{\varepsilon}(u) = \frac{C_{N,s}\varepsilon^{2s}}{4} \iint_{\mathbb{R}^{2N}\setminus \left(\Omega^c \right)^{2}} \frac{\vert u(x) - u(y) \vert^{2}}{\vert x-y \vert^{N+2s}} \dd x \dd y +\frac{1}{2} \int_{\Omega} \vert u \vert^{2} \dd x- \int_{\Omega}F(u) \dd x.
\]

As an easy consequence of Remark \eqref{obs1} and of the above discussion, we have that the functional $I_{\varepsilon}$ is well-defined and $I_{\varepsilon} \in C^{1}(H_{\varepsilon}^{s}(\Omega)), \mathbb{R})$.

The derivative of the functional $I_{\varepsilon}$ is given by
\begin{align*}
I_{\varepsilon}'(u)\cdot v &= \frac{C_{N,s}\varepsilon^{2s}}{2}\iint_{\mathbb{R}^{2N}\backslash \left(\Omega^c \right)^{2}}\frac{(u(x)-u(y))(v(x)-u(y))}{\vert x-y \vert^{N+2s}} \dd x \dd y + \int_{\Omega} u v \dd x\\&\quad - \int_{\Omega}f(u) v\dd y.
\end{align*}

Therefore, critical points of $I_{\varepsilon}$ are weak solutions of \eqref{P}.
\section{Proof of Theorem \ref{Exis}} \mbox{}
With arguments similar to that of Lin, Ni, and Takagi \cite{MR929196}, we prove Theorem \ref{Exis}, which is a consequence of the following lemmas.
\begin{lemma}\label{gpm}
There exist $\rho,\delta>0$ such that $I_{\varepsilon}|_S\geq \delta>0$ for all $u\in S$, where
\[S=\left\{u\in H_{\varepsilon}^s(\Omega)\,:\, \Vert u\Vert_{H_{\varepsilon}^{s}} =\rho\right\}. \]
\end{lemma}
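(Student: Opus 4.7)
The plan is to exploit the fact that $I_{\varepsilon}$ decomposes as a quadratic term (the norm squared) minus a nonlinear integral that is $o(\|u\|^2)$ near zero, so the quadratic part dominates on a small sphere. Comparing the definition of $I_{\varepsilon}(u)$ with the inner product \eqref{PI}, one immediately identifies
\[
I_{\varepsilon}(u)=\tfrac{1}{2}\|u\|_{H_{\varepsilon}^{s}(\Omega)}^{2}-\int_{\Omega}F(u)\,\dd x.
\]
So everything reduces to estimating $\int_{\Omega}F(u)\,\dd x$ from above by a small fraction of $\|u\|^2$ plus a superquadratic remainder.

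The bound \eqref{boundF} from Remark \ref{obs1} gives, for any $\eta>0$,
\[
\int_{\Omega}|F(u)|\,\dd x\;\leq\;\eta\int_{\Omega}|u|^{2}\,\dd x+C_{\eta}\int_{\Omega}|u|^{p}\,\dd x.
\]
The $L^{2}$ term is trivially controlled by the definition of the norm: $\|u\|_{L^{2}(\Omega)}^{2}\leq \|u\|_{H_{\varepsilon}^{s}(\Omega)}^{2}$. For the $L^{p}$ term, since $2<p<2_{s}^{*}$, I would use the continuous embedding $H_{\varepsilon}^{s}(\Omega)\hookrightarrow L^{p}(\Omega)$ noted in Remark \ref{obs2} (alternatively, one can interpolate between $L^{2}(\Omega)$ and $L^{2_{s}^{*}}(\Omega)$ and invoke Lemma \ref{DS}, at the cost of an explicit but harmless $\varepsilon$-dependence). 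Either way one gets $\|u\|_{L^{p}(\Omega)}\leq C_{\varepsilon}\|u\|_{H_{\varepsilon}^{s}(\Omega)}$.

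Choosing $\eta=\tfrac{1}{4}$ and combining these estimates yields, for some $C'=C'(\varepsilon,\eta)>0$,
\[
I_{\varepsilon}(u)\;\geq\;\tfrac{1}{2}\|u\|^{2}-\tfrac{1}{4}\|u\|^{2}-C'\|u\|^{p}\;=\;\|u\|^{2}\!\left(\tfrac{1}{4}-C'\|u\|^{p-2}\right),
\]
with norms in $H_{\varepsilon}^{s}(\Omega)$. Since $p>2$, one can pick $\rho>0$ so small that $\tfrac{1}{4}-C'\rho^{p-2}\geq \tfrac{1}{8}$; setting $\delta:=\rho^{2}/8$ then gives $I_{\varepsilon}(u)\geq \delta>0$ whenever $\|u\|_{H_{\varepsilon}^{s}(\Omega)}=\rho$, which is the claim.

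There is no real obstacle here; this is the standard first half of the mountain-pass geometry. The only mild subtlety is that the embedding constant $C_{\varepsilon}$ and therefore the choice of $\rho$ depend on $\varepsilon$, but the statement only requires existence of $\rho,\delta$ at each fixed $\varepsilon$, so this dependence is harmless. The uniform-in-$\varepsilon$ control needed for the later energy estimate $I_{\varepsilon}(u_{\varepsilon})\leq C\varepsilon^{N}$ will be handled not here but in the complementary lemma that supplies a test function with negative energy.
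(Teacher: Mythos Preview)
Your proof is correct and follows essentially the same route as the paper: write $I_{\varepsilon}(u)=\tfrac12\|u\|^{2}-\int_{\Omega}F(u)$, use the bound \eqref{boundF} on $F$, control the $L^{2}$ term by the norm and the $L^{p}$ term by the Sobolev embedding, and then choose $\rho$ small using $p>2$. The only cosmetic differences are that the paper keeps $\eta\in(0,\tfrac12)$ generic and writes the embedding constant explicitly as $S^{2}\varepsilon^{-2s}$ via Lemma~\ref{DS}, whereas you fix $\eta=\tfrac14$ and leave the constant as $C_{\varepsilon}$; your remark that $\rho,\delta$ may depend on $\varepsilon$ is also in line with the paper's argument.
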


\begin{proof}
Maintaining the notation of Remark \eqref{obs1}, the Sobolev embedding yields
\begin{align*}
I_{\varepsilon}(u) &=\frac{C_{N,s}\varepsilon^{2s}}{4} \iint_{\mathbb{R}^{2N}\setminus \left(\Omega \right)^{2}} \frac{\vert u(x) - u(y) \vert^{2}}{\vert x-y \vert^{N+2s}} \dd x \dd y + \frac{1}{2}\int_{\Omega} \vert u \vert^{2} \dd x- \int_{\Omega}F(u) \dd x \\
& \geq  \frac{1}{2}\Vert u \Vert_{H_{\varepsilon}^{2}(\Omega)}^{2} - \eta \int_{\Omega} \vert u\vert^2 \dd y -C_{\eta} \int_{\Omega} \vert u \vert^{p} \dd y = \frac{1}{2}\Vert u \Vert_{H_{\varepsilon}^{2}(\Omega)}^{2} - \eta \vert u \vert_{2}^{2} - C_{\eta} \vert u \vert_{p}^{p} \\
&\geq \left(\frac{1}{2} - \eta \right) \Vert u \Vert_{H_{\varepsilon}^{2}(\Omega)}^{2} - S^2\varepsilon^{-2s} \Vert u \Vert_{H_{\varepsilon}^{2}(\Omega)}^{p} .
\end{align*}
	
Taking $0 < \eta < \frac{1}{2}$, denote by $a = \displaystyle\frac{1}{2} - \eta$ and $A>0 = S^2\varepsilon^{-2s}$. So we obtain 
\begin{align*}
I_{\varepsilon}(u) &\geq a\Vert u \Vert_{H_{\varepsilon}^{2}(\Omega)}^{2} - A\Vert u \Vert_{H_{\varepsilon}^{2}(\Omega)}^{p}, \,\,\, \text{for all} \,\,\, u \in H_{\varepsilon}^{s}{(\Omega)}\\
&\geq \Vert u \Vert_{H^s_\varepsilon(\Omega)}^2 \bigg( a - A \Vert u \Vert_{H_\varepsilon^s(\Omega)}^{p-2} \bigg).
\end{align*}

Since $p \in \left(2, \frac{2N}{N-2s}\right)$, for $\rho \leq \left(\displaystyle\frac{a}{A}\right)^{\frac{1}{p-2}}$ we have 
\[
I_{\varepsilon}(u) \geq \rho^2(a-A\rho^{p-2}) > 0, \ \ \text{for all} \ \ \Vert u \Vert_{H_\varepsilon^{s}(\Omega)} = \rho.
\] 

\vspace*{-.5cm}$\hfill\Box$\end{proof}
\begin{lemma}\label{PScond}
If $(f_1)$-$(f_4)$ hold, then $I_{\varepsilon}$ satisfies the Palais-Smale condition.
\end{lemma}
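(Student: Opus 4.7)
The plan is to follow the standard three-step Palais--Smale strategy: (i) show that any sequence $(u_n) \subset H_\varepsilon^s(\Omega)$ with $I_\varepsilon(u_n)$ bounded and $I_\varepsilon'(u_n) \to 0$ in the dual is bounded in $H_\varepsilon^s(\Omega)$; (ii) extract a weakly convergent subsequence $u_n \rightharpoonup u$ and use Remark \ref{obs2} to upgrade this to strong convergence in $L^q(\Omega)$ for every $q \in (1, 2^*_s)$; (iii) show that in fact $\|u_n - u\|_{H_\varepsilon^s(\Omega)} \to 0$ by exploiting the subcritical growth of $f$.

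For boundedness, the key is the Ambrosetti--Rabinowitz condition $(f_4)$. I will combine $I_\varepsilon(u_n) - \tfrac{1}{\theta} I_\varepsilon'(u_n)\cdot u_n$ and use the fortunate identity
\[\left(\tfrac{1}{4} - \tfrac{1}{2\theta}\right) C_{N,s}\varepsilon^{2s}\,[u]_s^{2} + \left(\tfrac{1}{2} - \tfrac{1}{\theta}\right)\|u\|_{L^2(\Omega)}^2 = \left(\tfrac{1}{2} - \tfrac{1}{\theta}\right)\|u\|_{H_\varepsilon^s(\Omega)}^2,\]
where $[u]_s^2$ denotes the Gagliardo-type integral on $\mathbb{R}^{2N}\setminus(\Omega^c)^2$. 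The nonlinear remainder $\int_\Omega \bigl(\tfrac{1}{\theta}u_nf(u_n) - F(u_n)\bigr)\,\dd x$ is nonnegative on $\{u_n \ge a_3\}$ by $(f_4)$ and bounded on $\{0 \le u_n < a_3\}$ by continuity of $f,F$, while it vanishes where $u_n<0$ since $f\equiv 0$ there. Together with $|I_\varepsilon'(u_n)\cdot u_n| \le \|I_\varepsilon'(u_n)\|_*\|u_n\|_{H_\varepsilon^s(\Omega)}$ and Young's inequality, this gives $\|u_n\|_{H_\varepsilon^s(\Omega)} \le C$, since $\theta > 2$.

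Having established boundedness, passing to a subsequence I get $u_n \rightharpoonup u$ in the Hilbert space $H_\varepsilon^s(\Omega)$, and by the compact embedding from Remark \ref{obs2}, the restrictions converge strongly in $L^q(\Omega)$ for every $q \in (1, 2^*_s)$, in particular for $q=2$ and $q=p$. To obtain strong convergence in $H_\varepsilon^s(\Omega)$, I will test the difference $I_\varepsilon'(u_n) - I_\varepsilon'(u)$ against $u_n - u$; the left-hand side tends to zero since $I_\varepsilon'(u_n) \to 0$ in the dual, $(u_n)$ is bounded, and $I_\varepsilon'(u)(u_n - u) \to 0$ by weak convergence. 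This quantity equals
\[\|u_n - u\|_{H_\varepsilon^s(\Omega)}^{2} - \int_\Omega \bigl(f(u_n) - f(u)\bigr)(u_n - u)\,\dd x,\]
so it remains to show that the last integral goes to zero. Using the bound $|f(t)| \le \eta|t| + C_\eta|t|^{p-1}$ from \eqref{boundf} and Hölder's inequality gives
\[\left|\int_\Omega (f(u_n) - f(u))(u_n - u)\,\dd x\right| \le \eta\bigl(\|u_n\|_{L^2(\Omega)} + \|u\|_{L^2(\Omega)}\bigr)\|u_n - u\|_{L^2(\Omega)} + C_\eta\bigl(\|u_n\|_{L^p(\Omega)}^{p-1} + \|u\|_{L^p(\Omega)}^{p-1}\bigr)\|u_n - u\|_{L^p(\Omega)},\]
and this vanishes in the limit by the strong $L^2$ and $L^p$ convergence.

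I do not anticipate a serious obstacle: the only subtlety is the coefficient-matching in the boundedness step, which works thanks to the specific factor $1/2$ built into the $H_\varepsilon^s(\Omega)$-inner product, and ensuring that the nonlinear remainder coming from $(f_4)$ is controlled on $\{u_n < a_3\}$, which is handled by continuity of $f$ and $F$ together with $(f_1)$.
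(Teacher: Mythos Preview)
Your proposal is correct and follows essentially the same route as the paper: boundedness via the Ambrosetti--Rabinowitz condition $(f_4)$, compactness from Remark~\ref{obs2}, and strong convergence by testing $I_\varepsilon'(u_n)-I_\varepsilon'(u)$ against $u_n-u$ and controlling the nonlinear remainder with the growth bound \eqref{boundf}. The only cosmetic difference is in the boundedness step: the paper first writes $\tfrac{1}{2}\|u_n\|_{H_\varepsilon^s}^2 - k_0 \le \int_\Omega F(u_n)\,\dd x$ and then estimates $\int F(u_n)$ via $(f_4)$ and $I_\varepsilon'(u_n)\cdot u_n$, whereas you form the combination $I_\varepsilon(u_n)-\tfrac{1}{\theta}I_\varepsilon'(u_n)\cdot u_n$ directly; both lead to the same inequality $\bigl(\tfrac{1}{2}-\tfrac{1}{\theta}\bigr)\|u_n\|_{H_\varepsilon^s}^2 \le C + C\|u_n\|_{H_\varepsilon^s}$.
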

\begin{proof}
Let $(u_n)$ be a (PS)-sequence for $I_{\varepsilon}$ in $H_{\varepsilon}^{s}(\Omega)$. Thus,
\[
I_{\varepsilon}(u_n) \leq k_0 \qquad \text{and} \qquad I_{\varepsilon}'(u_n) \to 0.
\]	
	
Consequently, there exists $n_0 \in \mathbb{N}$ such that
\[
\left\vert \Vert u_n \Vert_{H_{\varepsilon}^{s}} - \int_{\Omega} f(u_n)u_n \dd x \right\vert = \vert I_{\varepsilon}'(u_n)\cdot u_n \vert \leq \Vert u_n \Vert_{H_{\varepsilon}^{s}}^{2} ,\,\,\,\, \forall n \geq n_0.
\]	
It follows from condition $(f_4)$ the existence of $\theta > 2$ and $a_3>0$ such that $0<\theta F(t) \leq tf(t), \ \ \forall t \geq a_3$.

So, we obtain
\begin{align*}
\frac{1}{2} \Vert u_n \Vert_{H_{\varepsilon}^{s}}^{2} - k_0 &\leq \int_{\Omega} F(u_n) \dd x \\
&\leq \frac{1}{\theta} \int_{\{x \in \Omega ; u_n \geq a_3\}}f(u_n)u_n \dd x + \int_{\{x \in \Omega ; u_n \leq a_3\}}F(u_n) \dd x  \\
&\leq \frac{1}{\theta} \left( \Vert u_n \Vert_{H_{\varepsilon}^{2}(\Omega)} + \Vert u_n \Vert_{H_{\varepsilon}^{2}(\Omega)}^{2} \right) + \int_{\{x \in \Omega ; u_n \leq a_3\}}F(u_n) \dd x  \\
&\leq \frac{1}{\theta} \Vert u_n \Vert_{H_{\varepsilon}^{2}(\Omega)} + \frac{1}{\theta} \Vert u_n \Vert_{H_{\varepsilon}^{2}(\Omega)}^{2} + A_1,
\end{align*}
where $A_1 = \vert \Omega \vert \left(\displaystyle\max_{0\leq t \leq a_3}F(t) \right) < \infty$.

Therefore, for all $n\geq n_0$,
\[
\left(\frac{1}{2} - \frac{1}{\theta} \right) \Vert u_n \Vert_{H_{\varepsilon}^{2}(\Omega)}^{2} \leq \Vert u_n \Vert_{H_{\varepsilon}^{2}(\Omega)} + k_1.
\]

Since $\theta > 2$, it follows that $(u_n)$ is bounded in $H_{\varepsilon}^{s}(\Omega)$.

Thus, for a subsequence 
\[
u_n \rightharpoonup u \,\,\, \text{in} \,\, H_{\varepsilon}^{s}(\Omega) \,\,\, \text{and} \,\,\, u_n \to u \,\,\, \text{in} \,\,\, L^{q}(\Omega),
\]
for all $q \in \left(1,\frac{2N}{N-2s} \right)$.

Condition $(f_2)$ allows us to conclude that (for a subsequence) we have
\begin{equation}\label{Des1}
\int_{\Omega} (f(u_n) - f(u))(u_n-u) \dd x \to 0 \,\,\,\, \text{as} \,\,\,\, n \to \infty.
\end{equation}

Combining \eqref{Des1} with the identity
\[
(I_{\varepsilon}'(u_n) - I_{\varepsilon}'(u))\cdot (u_n-u) = \Vert u_n - u \Vert_{H_{\varepsilon}^{2}(\Omega)}^{2} + \int_{\Omega}(f(u_n) - f(u))(u_n-u) \dd x,
\]
it follows that
\[
\lim_{n \to \infty}\Vert u_n - u \Vert_{H_{\varepsilon}^{2}(\Omega)}^{2} = \lim_{\to}(I_{\varepsilon}'(u_n) - I_{\varepsilon}'(u))\cdot (u_n-u) = 0,
\]
that is, $u_n \to u$ in $H_{\varepsilon}^{s}(\Omega)$.
$\hfill\Box$\end{proof}\vspace*{.3cm}

From now on, without loss of generality, we assume that $0 \in \Omega$. For any $\varepsilon > 0$ such that $B_{\varepsilon}(0) \subset \Omega$, following Lin, Ni, and Takagi \cite{MR929196} we define
\[
\phi_{\varepsilon}(x) = \left\{
\begin{array}{cl}
\varepsilon^{-N}\left(1- \displaystyle\frac{\vert x\vert}{\varepsilon}\right) \ \ &\mbox{if} \ \ \vert x\vert \leq \varepsilon,  \\ 
0  &\text{if} \,\ \vert x \vert \geq \varepsilon.
\end{array}\right.
\]

According to Chen \cite[Lemma 3.4]{Chen}, we have that, for $\varepsilon >0$ small, $\phi_{\varepsilon} \in H_{\varepsilon}^{s}(\Omega)$ and the following estimate is valid
\begin{equation}\label{Est1}
\Vert \phi_{\varepsilon} \Vert_{H_{\varepsilon}^{2}(\Omega)}^{2} \leq \frac{C}{\varepsilon^{N}},\qquad \textrm{where}\ \ C= C(N,s, \Omega).
\end{equation}

Moreover, we have (see \cite[Equation 2.11]{MR929196})
\begin{equation}\label{Des2}
\int_{\Omega} \vert \phi_{\varepsilon}(x) \vert^{q} \dd x = K_{q}\varepsilon^{(1-q)N}, \,\,\,\, \text{with} \,\,\,\, K_{q} = N\Omega_{N} \int_{0}^{1} (1-\rho)^{q}\rho^{N-1} \dd \rho.
\end{equation}
\vspace{0.3cm}

The following lemmas are adaptations of results in  Lin, Ni e Takagi \cite{MR929196}.
\begin{lemma}\label{Sig}
There exists a unique $\sigma \in (0,1)$ such that
\[
\int_{\Omega_{\sigma}} \vert \phi_{\varepsilon}(x) \vert^{2} \dd x = \frac{1}{2} \int_{\Omega} \vert \phi_{\varepsilon}(x) \vert^{2} \dd x
\]	
where $\Omega_{\sigma} = \{x \in \Omega\,:\, \phi_{\varepsilon}(x) > \sigma \varepsilon^{-N}\}$.
\end{lemma}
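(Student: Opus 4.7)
The plan is to reduce the problem to the monotonicity and continuity of a single real-valued function of $\sigma$. Since $\phi_\varepsilon$ is radial and strictly decreasing in $|x|$ on $B_\varepsilon(0)$, I first observe that for $\sigma\in(0,1)$ the superlevel set is explicit: $\phi_\varepsilon(x)>\sigma\varepsilon^{-N}$ iff $1-|x|/\varepsilon>\sigma$ iff $|x|<\varepsilon(1-\sigma)$. Hence, under the standing assumption $B_\varepsilon(0)\subset\Omega$, we have $\Omega_\sigma=B_{\varepsilon(1-\sigma)}(0)$.

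Next I introduce
\[
g(\sigma):=\int_{\Omega_\sigma}|\phi_\varepsilon(x)|^2\,\dd x,\qquad \sigma\in[0,1].
\]
Using the identification $\Omega_\sigma=B_{\varepsilon(1-\sigma)}(0)$, I can write $g$ in polar coordinates as
\[
g(\sigma)=N\omega_N\,\varepsilon^{-2N}\int_0^{\varepsilon(1-\sigma)}\!\Bigl(1-\tfrac{r}{\varepsilon}\Bigr)^{\!2}r^{N-1}\,\dd r,
\]
which makes it transparent that $g$ is continuous on $[0,1]$ and differentiable on $(0,1)$, with $g'(\sigma)=-N\omega_N\varepsilon^{-2N}\bigl(1-(1-\sigma)\bigr)^2\bigl(\varepsilon(1-\sigma)\bigr)^{N-1}\varepsilon<0$ for $\sigma\in(0,1)$, so $g$ is strictly decreasing.

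Finally I evaluate the endpoints: $g(0)=\int_{B_\varepsilon(0)}|\phi_\varepsilon|^2\,\dd x=\int_\Omega|\phi_\varepsilon|^2\,\dd x>0$ (since $\phi_\varepsilon\equiv 0$ outside $B_\varepsilon(0)$), while $g(1)=0$. The function $\sigma\mapsto g(\sigma)-\tfrac12 g(0)$ is therefore continuous, strictly decreasing, positive at $\sigma=0$ and negative at $\sigma=1$, so by the intermediate value theorem it has a unique zero in $(0,1)$, which is the desired $\sigma$.

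There is no real obstacle here; the only mild point is being careful that the superlevel set is a ball strictly contained in $\Omega$ (so that $\Omega_\sigma$ depends only on $\phi_\varepsilon$ and not on the geometry of $\Omega$), which is guaranteed by the hypothesis $B_\varepsilon(0)\subset\Omega$ used to define $\phi_\varepsilon$. Strict monotonicity of $g$, which gives uniqueness, follows either from the explicit derivative above or, more conceptually, from the fact that $\phi_\varepsilon$ is strictly positive on the annular region $\overline{B_{\varepsilon(1-\sigma_1)}}\setminus B_{\varepsilon(1-\sigma_2)}$ whenever $0\le\sigma_1<\sigma_2<1$.
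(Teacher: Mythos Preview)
Your proof is correct and follows essentially the same route as the paper: both identify $\Omega_\sigma=B_{\varepsilon(1-\sigma)}(0)$ and pass to polar coordinates. The only difference is that the paper evaluates both integrals in closed form and asserts the resulting algebraic equation in $\sigma$ has a solution, whereas you package the left-hand side as a continuous, strictly decreasing function $g(\sigma)$ and invoke the intermediate value theorem; your version makes existence and uniqueness explicit, while the paper leaves that step implicit.
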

\begin{proof}
In fact, note that if $\sigma \in (0,1)$ and $\phi_{\varepsilon}(x) > \sigma \varepsilon^{-N}$, then $\vert x \vert < (1-\sigma)\varepsilon$. Thus
\begin{align*}
\int_{\Omega_{\sigma}} \vert \phi_{\varepsilon}(x) \vert^{2} dx &= \int_{B_{(1-\sigma)\varepsilon}(0)}\varepsilon^{-2N}\left(1 - \displaystyle\frac{\vert x \vert}{\varepsilon}\right)^{2} \dd x = \frac{1}{\varepsilon^{2N+2}} \int_{B_{(1-\sigma)\varepsilon}(0)}\left(\varepsilon - \vert x \vert \right)^{2} \dd x \\
&= \frac{N\Omega_{N}}{\varepsilon^{2N + 2}} \int_{0}^{(1-\sigma)\varepsilon}\left(\varepsilon - r\right)^{2}r^{N-1} \dd r\\
&= \frac{N\Omega_{N}}{\varepsilon^{2N+2}} \int_{0}^{(1-\sigma)\varepsilon} \left[\varepsilon^{2}r^{N-1} - 2\varepsilon  r^{N} + r^{N+1}\right] \dd r\\
&= \frac{N\Omega_N(1-\sigma)^{N}}{\varepsilon^{N}}\left[\displaystyle\frac{1}{N} - \displaystyle\frac{2(1-\sigma)}{N+1} + \displaystyle\frac{(1-\sigma)^{2}}{N+2}\right].
\end{align*}

On the other hand, taking $q=2$ in \eqref{Des2}, we obtain
\[
\int_{\Omega} \vert \phi_{\varepsilon} \vert^{2} \dd x = \frac{N\Omega_{N}}{\varepsilon^{N}} \left[\displaystyle\frac{1}{N} - \displaystyle\frac{2}{N+1} + \displaystyle\frac{1}{N+2}\right].
\]

Thus, we conclude the claim just by taking $\sigma \in (0,1)$ such that
\[
(1-\sigma)^{N} \left[\displaystyle\frac{1}{N} - \displaystyle\frac{2(1-\sigma)}{N+1} + \displaystyle\frac{(1-\sigma)^{2}}{N+2}\right] = \left[\displaystyle\frac{1}{N} - \displaystyle\frac{2}{N+1} + \displaystyle\frac{1}{N+2}\right].
\vspace*{-.5cm}\]

$\hfill\Box$\end{proof}\vspace*{.3cm}

Now, consider the function $g\colon [0,\infty) \to \mathbb{R}$ defined by
\begin{equation}\label{g}
g(t) = I_{\varepsilon}(t\phi_{\varepsilon})= \frac{t^{2}}{2} \Vert \phi_{\varepsilon} \Vert_{H_{\varepsilon}^{2}(\Omega)}^{2} - \int_{\Omega}F(t\phi_{\varepsilon}) \dd x.
\end{equation}
\begin{lemma}\label{PG}
There exist $t_1,t_2\in [0,\infty)$ with $0<t_1<t_2$ such that
\begin{enumerate}
\item [$(i)$] $g'(t) < 0$ if $t> t_1$;
\item [$(ii)$] $g(t) < 0$ if $t\geq t_2$.
\end{enumerate}
\end{lemma}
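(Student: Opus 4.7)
The plan is to exploit the superlinear behavior of $f$ at infinity (from $(f_3)$) to handle $(i)$ and the super-quadratic behavior of $F$ (from $(f_4)$) to handle $(ii)$. The crucial enabling fact is Lemma \ref{Sig}: it provides a subset $\Omega_\sigma \subset \Omega$ on which $\phi_\varepsilon(x) > \sigma \varepsilon^{-N}$, and such that $\int_{\Omega_\sigma} \phi_\varepsilon^2\,\dd x = \tfrac12 \int_\Omega \phi_\varepsilon^2 \,\dd x$. In particular, for any $t$ large enough that $t \sigma \varepsilon^{-N}$ exceeds a given threshold, the argument $t\phi_\varepsilon(x)$ exceeds that threshold uniformly on $\Omega_\sigma$. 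This is the lever that turns the pointwise asymptotics in $(f_3)$ and $(f_4)$ into useful integral estimates.

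For part $(i)$ I would first differentiate to obtain
\[g'(t) = t\,\|\phi_\varepsilon\|_{H^s_\varepsilon(\Omega)}^{2} - \int_\Omega f(t\phi_\varepsilon)\phi_\varepsilon \,\dd x.\]
By $(f_3)$, given any $M>0$ one can pick $T_M>0$ with $f(s)\geq Ms$ for $s\geq T_M$. Fixing $M$ so large that $\tfrac{M}{2}\int_\Omega \phi_\varepsilon^2\,\dd x > \|\phi_\varepsilon\|_{H^s_\varepsilon(\Omega)}^{2}$ (possible since both quantities are finite positive numbers once $\varepsilon$ is fixed), I would restrict the integral to $\Omega_\sigma$, where $t\phi_\varepsilon \geq t\sigma\varepsilon^{-N} \geq T_M$ as soon as $t \geq t_1 := T_M \varepsilon^{N}/\sigma$. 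Using Lemma \ref{Sig}, this yields
\[\int_\Omega f(t\phi_\varepsilon)\phi_\varepsilon\,\dd x \geq Mt \int_{\Omega_\sigma}\phi_\varepsilon^2\,\dd x = \tfrac{Mt}{2}\int_\Omega \phi_\varepsilon^2\,\dd x,\]
and hence $g'(t) < 0$ for $t>t_1$.

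For part $(ii)$ I would first upgrade $(f_4)$ to the pointwise bound $F(s) \geq C_0 s^{\theta}$ valid for $s\geq a_3$, with $C_0 = F(a_3)/a_3^{\theta}>0$; this comes from the standard observation that $\tfrac{\dd}{\dd s}\!\bigl(F(s)/s^\theta\bigr)\geq 0$ on $[a_3,\infty)$, together with $F(a_3)>0$ (which follows from $(f_1)$). Then, for $t$ large enough that $t\sigma\varepsilon^{-N}\geq a_3$, I would again discard the complement of $\Omega_\sigma$ and get
\[g(t)\leq \tfrac{t^2}{2}\|\phi_\varepsilon\|_{H^s_\varepsilon(\Omega)}^{2} - C_0\, t^{\theta}\int_{\Omega_\sigma}\phi_\varepsilon^{\theta}\,\dd x.\]
Since $\int_{\Omega_\sigma}\phi_\varepsilon^{\theta}\,\dd x$ is a fixed positive constant and $\theta>2$, the second term dominates for $t$ sufficiently large, giving $g(t)<0$; taking $t_2$ to be any such value larger than $t_1$ enforces the ordering $t_1<t_2$.

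Neither step should present real difficulty; the only delicate point is making sure that one chooses $M$ in $(i)$ after $\varepsilon$ is fixed (so that the ratio $\|\phi_\varepsilon\|_{H^s_\varepsilon(\Omega)}^{2}/\!\int_\Omega\phi_\varepsilon^{2}$ is a finite constant), and that both thresholds are applied only after restricting to $\Omega_\sigma$ so that $(f_3)$ and $(f_4)$ can be activated uniformly. The entire argument is independent of the particular value of $\varepsilon$ in the sense that it works for every fixed $\varepsilon$ small enough that $B_\varepsilon(0)\subset\Omega$ and $\phi_\varepsilon\in H^s_\varepsilon(\Omega)$.
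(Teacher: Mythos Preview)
Your argument is correct. Part $(i)$ is essentially the paper's proof: both use $(f_3)$ to get $f(s)\ge Ms$ past a threshold, restrict the integral to $\Omega_\sigma$, and invoke Lemma~\ref{Sig} to produce $t_1 = T_M\varepsilon^N/\sigma$. (A small remark: since $\int_\Omega\phi_\varepsilon^2 = K_2\varepsilon^{-N}$ by \eqref{Des2} and $\|\phi_\varepsilon\|^2_{H^s_\varepsilon}\le C\varepsilon^{-N}$ by \eqref{Est1}, your condition on $M$ reduces to $M>2C/K_2$, which is independent of $\varepsilon$; this matters downstream in Lemma~\ref{Lim}, where the explicit form $t_1 = \mathrm{const}\cdot\varepsilon^N$ is used.)

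Part $(ii)$ is where you diverge from the paper. You invoke $(f_4)$ to obtain the power bound $F(s)\ge C_0 s^\theta$ on $[a_3,\infty)$, restrict to $\Omega_\sigma$, and let the $t^\theta$ term beat the $t^2$ term. The paper instead stays with $(f_3)$ alone: from $f(\xi)\ge R\xi$ for $\xi\ge M_R$ it derives the \emph{global} quadratic bound $F(\xi)\ge \tfrac{R}{2}\xi^2 - m_R$ valid for all $\xi\ge 0$ (the constant $m_R$ absorbs the low range), so no restriction to $\Omega_\sigma$ is needed and one gets directly $g(t)\le \tfrac{t^2}{2\varepsilon^N}(C-RK_2)+m_R|\Omega|$. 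The paper's route is slightly more economical in hypotheses (it never touches $(f_4)$), while yours is the more familiar Ambrosetti--Rabinowitz maneuver; both are valid and yield the same conclusion.
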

\begin{proof}
Taking the derivative in \eqref{g} and applying estimate \eqref{Est1}, we obtain
\begin{align}\label{g1}
g'(t)= t\Vert \phi_{\varepsilon} \Vert_{H_{\varepsilon}^{2}(\Omega)}^{2} - \int_{\Omega}f(t\phi_{\varepsilon})\phi_{\varepsilon}\dd x
\leq \frac{tC}{\varepsilon^{N}} - \int_{\Omega}f(t\phi_{\varepsilon})\phi_{\varepsilon}\dd x.
\end{align}

Note that condition $(f_3)$  implies that, for any $R>0$, there exists $M_{R} > 0$ such that for all $\xi \geq M_{R}$, we have
\begin{equation}\label{Des3}
f(\xi)\geq R\xi. 
\end{equation}

Denote
\[
\Omega_{1} = \left\{x \in \Omega\,:\, \phi_{\varepsilon}(x) > \frac{M_{R}}{t}\right\}.
\]

Keeping in mind Lemma \ref{Sig}, note that $\Omega_{\sigma} \subset \Omega_{1}$ for $t > \frac{M_{R}\varepsilon^{N}}{\sigma}$. Since $f(t)>0$, for such $t$, it follows from \eqref{Des3} that
\begin{align*}
\int_{\Omega} f(t\phi_{\varepsilon}(x))\phi_{\varepsilon}(x) \dd x &\geq \int_{\Omega_{1}} f(t\phi_{\varepsilon}(x))\phi_{\varepsilon}(x) \dd x\geq \int_{\Omega_1} Rt\phi_{\varepsilon}(x)\phi_{\varepsilon}(x) \dd x\\
&\geq Rt \int_{\Omega_{\sigma}} \left(\phi_{\varepsilon}(x) \right)^{2} \dd x.
\end{align*}

Substituting into \eqref{g1} and applying Lemma \ref{Sig}, we obtain 
\begin{align*}
g'(t) &\leq \frac{Ct}{\varepsilon^{N}} - Rt\int_{\Omega_{\sigma}} \left(\phi_{\varepsilon}(x) \right)^{2} \dd x= \frac{Ct}{\varepsilon^{N}} - \frac{Rt}{2}\int_{\textcolor{blue}{\Omega}} \left(\phi_{\varepsilon}(x) \right)^{2} \dd x=t\varepsilon^{-N}\left(C-\frac{K_2 R}{2}\right).
\end{align*}
where $K_2$ was defined in \eqref{Des2}. So, for $R_1 > \frac{2C}{K_2}$, we have
\[
g'(t) < 0 \qquad \text{for any} \qquad t> \frac{M_{R}\varepsilon^{N}}{\sigma}= t_1.
\]

In order to prove $(ii)$, note that $(f_1)$ and \eqref{Des3} imply that, for any $\xi \geq M_{R}$, we have
\begin{align*}
F(\xi) = \int_{0}^{\xi} f(\tau)\dd \tau &= \int_{0}^{M_{R}}f(\tau) \dd \tau + \int_{M_{R}}^{\xi}f(\tau) \dd \tau \\
&\geq \int_{M_{R}}^{\xi} R\tau \dd \tau = \frac{R\xi^{2}}{2} - m_{R} 
\end{align*}
where $m_{R} = \displaystyle\frac{M_{R}^{2} R}{2}$.

Applying again \eqref{Des2}, we obtain 
\begin{align}\label{g2}
g(t) &= \frac{t^2}{2} \Vert \phi_{\varepsilon} \Vert_{H_{\varepsilon}^{s}}^{2} - \int_{\Omega}F(t\phi_{\varepsilon}) \dd x\\
&\leq \frac{t^2 C}{2\varepsilon^{N}} - \frac{RK_2t^{2}}{2 \varepsilon^{N}} + m_R \vert \Omega \vert \nonumber\\
&= \frac{t^2}{2\varepsilon^{N}}\left(C - RK_2\right) + m_R\vert \Omega \vert.\nonumber 
\end{align}

Taking $R_2 > \frac{C}{K_2}$, it follows that
\[g(t)<0\quad\textrm{for all}\quad t>0\quad\textrm{such that}\quad t^2>\frac{2m_r|\Omega|\varepsilon^N}{R_2K_2-C}.\]\goodbreak

In order to have $t_2 > t_1$, we take $t_2$ satisfying 
\[
t_2>\frac{M_{R}\varepsilon^{N}}{\sigma} \qquad \text{and}\qquad t_{2}^2 > \displaystyle\frac{2m_R \vert \Omega \vert \varepsilon^{N}}{R_2K_2 - C}.
\]
We are done.
$\hfill\Box$\end{proof}\vspace*{.3cm}

\begin{lemma}\label{Lim}
For all $\varepsilon > 0$ sufficiently small, there exists a nonnegative function $\phi \in H_{\varepsilon}^{s}(\Omega)$ and $t_0 >0$ such that $I_{\varepsilon}(t_0 \phi) = 0$. Moreover, there is $C= C(N,s,\Omega)>0$ 
\[
I_{\varepsilon}(t\phi) \leq C \varepsilon^{N}\quad\textrm{for all }\ t. 
\]
\end{lemma}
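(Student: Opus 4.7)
The natural candidate is $\phi := \phi_\varepsilon$ itself; it is nonnegative and, by Chen's estimate \eqref{Est1}, belongs to $H_\varepsilon^s(\Omega)$ for $\varepsilon$ small. Set $g(t) := I_\varepsilon(t\phi_\varepsilon)$, which is a continuous function of $t \ge 0$ with $g(0)=0$. To produce $t_0$, I would apply the intermediate value theorem to $g$: Lemma \ref{gpm}, evaluated at $t^\ast := \rho/\|\phi_\varepsilon\|_{H_\varepsilon^s(\Omega)}$, gives $g(t^\ast) \ge \delta > 0$, while Lemma \ref{PG}$(ii)$ gives $g(t_2) < 0$. Since $\|\phi_\varepsilon\|_{H_\varepsilon^s(\Omega)} \lesssim \varepsilon^{-N/2}$, we have $t^\ast \gtrsim \varepsilon^{N/2}$, which is much smaller than $t_2$ for $\varepsilon$ small, so the IVT yields $t_0 \in (t^\ast, t_2)$ with $g(t_0)=0$.

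For the uniform bound, the strategy is to combine two ingredients. First, $(f_1)$ ensures $f \ge 0$ on $[0,\infty)$, so $F \ge 0$ there, whence
\[
g(t) \le \frac{t^2}{2}\,\|\phi_\varepsilon\|_{H_\varepsilon^s(\Omega)}^2 \le \frac{C\,t^2}{2\varepsilon^{N}}
\]
by \eqref{Est1}. Second, Lemma \ref{PG}$(i)$ guarantees that $g$ is strictly decreasing on $[t_1,\infty)$, where $t_1 = M_R\varepsilon^N/\sigma$. Consequently the supremum of $g$ on $[0,\infty)$ is attained in $[0,t_1]$, and is bounded by
\[
\sup_{t\ge 0} g(t) \;\le\; \frac{C\,t_1^2}{2\varepsilon^{N}} \;=\; \frac{C\,M_R^2}{2\sigma^{2}}\,\varepsilon^{N},
\]
which is precisely the claimed estimate $I_\varepsilon(t\phi_\varepsilon) \le C\varepsilon^{N}$ for every $t \ge 0$.

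\textbf{Main obstacle.} The delicate point is not the existence of $t_0$ (a direct IVT argument) but the matching of scales in the uniform bound. The norm estimate $\|\phi_\varepsilon\|_{H_\varepsilon^s(\Omega)}^{2} \le C\varepsilon^{-N}$ degenerates as $\varepsilon \to 0$, so the quadratic bound on $g$ is useless on any $t$-interval of order one. The only reason one recovers $g \le C\varepsilon^{N}$ uniformly is that Lemma \ref{PG}$(i)$ forces the maximum of $g$ to occur on the tiny interval $[0,t_1]$ of length $\sim\varepsilon^{N}$; the factor $t_1^{2}$ then exactly compensates the blow-up of $\|\phi_\varepsilon\|^2$. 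This cancellation is the heart of the argument, and it is why Lemma \ref{PG}$(i)$ (rather than some more direct pointwise estimate) is indispensable.
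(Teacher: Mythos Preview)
Your proposal is correct and follows essentially the same route as the paper: take $\phi=\phi_\varepsilon$, use the IVT between a point where $g>0$ (from Lemma~\ref{gpm}) and $t_2$ (from Lemma~\ref{PG}$(ii)$) to produce $t_0$, and for the uniform bound combine $F\ge 0$ with Lemma~\ref{PG}$(i)$ so that the maximum of $g$ lies in $[0,t_1]$ and is controlled by $Ct_1^2/(2\varepsilon^{N})=C_1\varepsilon^{N}$. Your discussion of the scale cancellation is exactly the mechanism the paper exploits.
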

\begin{proof}
According to Lemma \ref{gpm}, we have $g(t) >0$ for $t$ sufficiently small. Lemma \ref{PG} and $(f_1)$ imply that $g(t)\geq 0$ for $0<t<t_1$. Thus, by substituting \eqref{Est1} into \eqref{g2}, we obtain
\begin{align*}
\max_{t\geq 0}g(t)= \max_{0\leq t \leq t_{1}} g(t)&\leq \max_{0\leq t \leq t_{1}}\left\{\frac{Ct^{2}}{2\varepsilon^{N}} - \int_{\Omega}F(t\phi_{\varepsilon}) \dd x \right\}\leq \max_{0\leq t \leq t_{1}} \frac{Ct^2}{2\varepsilon^{N}} = \frac{Ct_{1}^{2}}{2\varepsilon^{N}}.
\end{align*}
Since $t_{1} = \frac{M_{R}\varepsilon^{N}}{\sigma}$, we have 
\[
I_{\varepsilon}(t\phi_{\varepsilon}) = g(t) \leq \max_{t\geq 0}g(t) =\frac{CM^2_R\varepsilon^{2N}}{2\varepsilon^N\sigma^2}= C_1\varepsilon^{N}\] 
for a positive constant $C_1$. The existence of $t_0>t_1$ also follows from Lemma \ref{PG}.
$\hfill\Box$\end{proof}\vspace*{.3cm} 
 
\noindent\textbf{Theorem 1.}\textit{
 Assume $(f_1)$-$(f_5)$. Then, for $\varepsilon$ is sufficiently small, there exists a non-constant, nonnegative solution of \eqref{P} satisfying 
 \[
 I_{\varepsilon}(u_\varepsilon) \leq C\varepsilon^{N}
 \]  
 where $C>0$ depends only on $\Omega$ and $f$.}\vspace*{.2cm}

\textit{Proof.}  Choose $t_2$ as in Lemma \ref{PG} and define $e=t_2\varphi_\varepsilon\in H_{\varepsilon}^{s}$. The geometry of the Mountain Pass Theorem was obtained in Lemmas \ref{gpm} and \ref{PG}, while the (PS)-condition was proved in Lemma \ref{PScond}. Considering 
\[
\Gamma = \left\{ \gamma \in C([0,1];H_{\varepsilon}^{s}(\Omega)) ; \,\,\, \gamma(0) = 0 \,\,\, \text{and} \,\,\, \gamma(1)=e \,\, \right\},
\]
the value
\[
c_{\varepsilon}:= \inf_{\gamma \in \Gamma} \max_{0\leq t \leq 1} I_{\varepsilon}(\gamma(t)) \geq \delta > 0
\]
is a critical value of $I_{\varepsilon}$. Therefore, there exists $u_{\varepsilon} \in H_{\varepsilon}^{s}(\Omega)$ such that,
\[
I_{\varepsilon}(u_{\varepsilon}) = c_{\varepsilon} \,\,\, \text{and} \,\,\, I_{\varepsilon}'(u_\varepsilon) = 0.
\]

In particular,  Lemma \ref{Lim} implies that
\[
I_{\varepsilon}(u_{\varepsilon}) = c_{\varepsilon} \leq \max_{0\leq t \leq t_2} I_{\varepsilon}(t\phi_{\varepsilon}) \leq C \varepsilon^{N}.
\]

Observe that, if $u = \mu$ is a solution to our problem \eqref{P}, then
\[
f(\mu) = \mu.
\]

It follows from condition $(f_5)$ that
\begin{align*}
I_{\varepsilon}(\mu) &= \frac{\mu^{2}}{2} - \int_{\Omega} F(\mu) \dd x=\left(\frac{\mu^{2}}{2} - F(\mu) \right) \vert \Omega \vert \geq \alpha \vert \Omega \vert > 0 .
\end{align*}
Thus, for $\varepsilon < \left(\frac{\alpha \vert \Omega \vert}{C}\right)^{\frac{1}{N}}$ we obtain
\[
I_{\varepsilon}(u_{\varepsilon}) \leq C \varepsilon^{N} < \alpha \vert \Omega \vert = I_{\varepsilon}(\mu),
\]
meaning that, for $\varepsilon>0$ sufficiently small, $u_{\varepsilon}$ can not be constant, and therefore, is nontrivial.

Finally, condition $(f_1)$ implies that $f(u_{\varepsilon}) = 0$ if $x \in  \{ x \in \Omega; u_{\varepsilon} \leq 0\}$. Thus, denoting for $u_{\varepsilon}^{-}= \max\{-u_{\varepsilon},0\}$ we have
\[
\int_{\Omega} f(u_{\varepsilon})\,u_{\varepsilon}^{-} \dd x = \int_{\{x \in \Omega; \,\, u_{\varepsilon} >0\}} f(u_{\varepsilon})\,u_{\varepsilon}^{-} \dd x  +  \int_{\{x \in \Omega; \,\, u_{\varepsilon} \leq 0\}} f(u_{\varepsilon})\,u_{\varepsilon}^{-} \dd x = 0.
\]

Therefore,
\begin{align*}
0 &=I'_{\varepsilon}(u_{\varepsilon})\cdot u_{\varepsilon}^{-} \\
&=\frac{C_{N,s} \varepsilon^{2s}}{2} \iint_{\mathbb{R}^{2N} \backslash (\Omega^{c})^{2}} \frac{(u_{\varepsilon}(x) -u_{\varepsilon}(y))(u_{\varepsilon}^{-}(x) -u_{\varepsilon}^{-}(y))}{\vert x-y \vert^{N+2s}} \dd x \dd y + \int_{\Omega} \vert u_{\varepsilon}^{-} \vert^{2}\dd x.
\end{align*}

Now, the inequality $(\xi - \eta)(\xi^{-} - \eta^{-}) \geq \vert \xi^{-} - \eta^{-}\vert^{2}$ guarantees that
\[
\frac{C_{N,s} \varepsilon^{2s}}{2} \iint_{\mathbb{R}^{2N} \backslash (\Omega^{c})^{2}} \frac{\vert u_{\varepsilon}^{-}(x) -u_{\varepsilon}^{-}(y)\vert^{2}}{\vert x-y \vert^{N+2s}} \dd x \dd y + \int_{\Omega} \vert u_{\varepsilon}^{-}\vert^{2} \dd x = 0,
\]
proving that $u_{\varepsilon}^{-} \equiv 0$, that is, $u_{\varepsilon} \geq 0$.
$\hfill\Box$
 
\begin{corollary}
Assume conditions $(f_1)$-$(f_5)$ with $a_3 = 0$. If $u_\varepsilon$ is a solution of \eqref{P}, then there exists a constant $K_0 > 0$ such that
\[
\Vert u_{\varepsilon} \Vert_{H_{\varepsilon}^{s}}^{2} = \int_{\Omega} f(u_{\varepsilon})\,u_{\varepsilon} \dd x \leq  K_0 \varepsilon^{N}.
\]
\end{corollary}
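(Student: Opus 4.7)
The plan is to combine the Nehari-type identity $I'_\varepsilon(u_\varepsilon)\cdot u_\varepsilon=0$ with the estimate $I_\varepsilon(u_\varepsilon)\le C\varepsilon^N$ already furnished by Theorem \ref{Exis}, using the strengthened Ambrosetti--Rabinowitz condition $(f_4)$ with $a_3=0$ to control $\int_\Omega F(u_\varepsilon)\,\dd x$ by $\int_\Omega f(u_\varepsilon)u_\varepsilon\,\dd x$.

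First, since $u_\varepsilon$ is a weak solution, testing $I'_\varepsilon(u_\varepsilon)=0$ against $v=u_\varepsilon$ immediately yields the identity
\[
\Vert u_\varepsilon\Vert_{H_{\varepsilon}^{s}}^{2}=\int_\Omega f(u_\varepsilon)\,u_\varepsilon\,\dd x,
\]
which settles the first equality in the statement. Next, observe that by $(f_1)$ we have $F(t)=0$ for $t\leq 0$, while by Theorem \ref{Exis} the solution $u_\varepsilon$ is nonnegative. With $a_3=0$, condition $(f_4)$ then gives $\theta F(u_\varepsilon(x))\le u_\varepsilon(x)f(u_\varepsilon(x))$ pointwise a.e., so integrating,
\[
\int_\Omega F(u_\varepsilon)\,\dd x\le \frac{1}{\theta}\int_\Omega f(u_\varepsilon)u_\varepsilon\,\dd x=\frac{1}{\theta}\Vert u_\varepsilon\Vert_{H_{\varepsilon}^{s}}^{2}.
\]

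Finally, substituting into the energy $I_\varepsilon(u_\varepsilon)=\tfrac12\Vert u_\varepsilon\Vert_{H_{\varepsilon}^{s}}^{2}-\int_\Omega F(u_\varepsilon)\,\dd x$, one gets
\[
\left(\frac{1}{2}-\frac{1}{\theta}\right)\Vert u_\varepsilon\Vert_{H_{\varepsilon}^{s}}^{2}\le I_\varepsilon(u_\varepsilon)\le C\varepsilon^N,
\]
where the last inequality is the conclusion of Theorem \ref{Exis}. Since $\theta>2$, setting $K_0:=\frac{2\theta C}{\theta-2}$ yields $\Vert u_\varepsilon\Vert_{H_{\varepsilon}^{s}}^{2}\le K_0\varepsilon^N$, which combined with the Nehari identity of the first step gives the full chain in the statement.

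This argument is essentially a routine application of the Ambrosetti--Rabinowitz trick; there is no real obstacle, the only subtle point being to verify that, thanks to the assumption $a_3=0$ and the a posteriori nonnegativity of $u_\varepsilon$, the inequality $\theta F\le tf$ can be integrated over the whole of $\Omega$ (not merely over the superlevel set $\{u_\varepsilon\ge a_3\}$ as in the proof of Lemma \ref{PScond}). Without $a_3=0$ one would pick up an extra additive constant of order $|\Omega|$, which would spoil the $\varepsilon^N$ scaling — this is exactly why the hypothesis $a_3=0$ is imposed in the statement of the corollary.
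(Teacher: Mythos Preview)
Your proof is correct and follows essentially the same approach as the paper: both combine the Nehari identity $I'_\varepsilon(u_\varepsilon)\cdot u_\varepsilon=0$ with the energy bound from Theorem~\ref{Exis} and the Ambrosetti--Rabinowitz inequality $(f_4)$ (with $a_3=0$) to obtain $\bigl(\tfrac12-\tfrac1\theta\bigr)\Vert u_\varepsilon\Vert_{H^s_\varepsilon}^2\le C\varepsilon^N$. Your write-up is in fact slightly more careful than the paper's, since you explicitly justify why $a_3=0$ is needed and invoke the nonnegativity of $u_\varepsilon$ to ensure the pointwise inequality $\theta F(u_\varepsilon)\le u_\varepsilon f(u_\varepsilon)$ holds everywhere on $\Omega$.
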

\begin{proof} Since $I'_\varepsilon(u_\varepsilon)\cdot u_\varepsilon=0$, we have
\[
\frac{C_{N,s} \varepsilon^{2s}}{2} \iint_{\mathbb{R}^{2N} \backslash (\Omega^{c})^{2}} \frac{\vert u_{\varepsilon}(x) - u_{\varepsilon}(y) \vert^{2}}{\vert x-y\vert^{N+2s}} \dd x \dd y + \int_{\Omega} \vert u_{\varepsilon} \vert^{2} \dd x = \int_{\Omega} f(u_{\varepsilon})\,u_{\varepsilon} \dd x,
\]	
that is,
\[\Vert u_{\varepsilon} \Vert_{H_{\varepsilon}^{2}(\Omega)}^{2}=\int_{\Omega} f(u_{\varepsilon})\,u_{\varepsilon} \dd x.\]
Theorem \ref{Exis} and $(f_4)$ yield
\begin{align*}
C \varepsilon^{N} &\geq I_{\varepsilon}(u_{\varepsilon}) = \frac{1}{2} \Vert u_{\varepsilon} \Vert_{H_{\varepsilon}^{2}(\Omega)}^{2} - \int_\Omega F(u_{\varepsilon}) \dd x \\
&\geq \frac{1}{2} \Vert u_{\varepsilon} \Vert_{H_{\varepsilon}^{2}(\Omega)}^{2} - \frac{1}{\theta} \int_\Omega f(u_{\varepsilon})\,u_{\varepsilon} \dd x= \left(\frac{1}{2} - \frac{1}{\theta} \right) \Vert u_{\varepsilon} \Vert_{H_{\varepsilon}^{2}(\Omega)}^{2}\\
\end{align*}

Since $\theta>2$, we obtain that
\[
\Vert u_{\varepsilon} \Vert_{H_{\varepsilon}^{2}(\Omega)}^{2} \leq K_0 \varepsilon^{N}.
\]

\vspace*{-.4cm}$\hfill\Box$\end{proof}
 
\section{Proof of Theorem \ref{Limita}}

\noindent\textbf{Theorem 2.}\textit{
Suppose $0<s<1$, $(f_1)$-$(f_3)$ holds. If $u_\varepsilon$ is a solution  to problem \eqref{P} with $\varepsilon > 0$ small enough, then $u_{\varepsilon} \in  L^{\infty}(\Omega)$.}\vspace*{.3cm}

\textit{Proof.} In order to simplify the notation, we denote $u = u_{\varepsilon}$ a solution of \eqref{P} for $\varepsilon >0$ sufficiently small. Theorem \ref{Exis} guarantees that $u \geq 0$. Given $\alpha > 1$ and $M>0$, consider the functions $u_{M}= \min\left\{u, M \right\}$ and
\[
g_{\alpha, M}(t) = t\left( \min\{t, M\} \right)^{\alpha -1}=\left\{\begin{array}{rc}
t^{\alpha},&\mbox{if}\quad t\leq M\\
tM^{\alpha -1}, &\mbox{if}\quad t>M.
\end{array}\right.
\]

Since $g_{\alpha, M}$ is Lipschitz continuous and increasing, we conclude that $g_{\alpha, M}(u) \in H_{\varepsilon}^s(\Omega)$, for all $u \in H_{\varepsilon}^{s}(\Omega)$.

Thus,
\begin{equation*}
I_{\varepsilon}'(u)\cdot g_{\alpha,M}(u) = 0
\end{equation*}
that is,
\begin{align}\label{SF}
\frac{C_{N,s} \varepsilon^{2s}}{2}  \iint_{\mathbb{R}^{2N} \backslash (\Omega^{c})^{2}} &\frac{\left(u(x) - u(y) \right)\left(g_{\alpha,M}(u)(x) - g_{\alpha,M}(u)(y)\right)}{\vert x-y\vert^{N+2s}} \dd x \dd y + \int_{\Omega} u\,g_{\alpha,M}(u) \dd x\nonumber \\
& = \int_{\Omega} f(u)\,g_{\alpha,M}(u) \dd x.
\end{align}

We define the function,
\[
G_{\alpha,M}(t) = \int_{0}^{t} \left(g_{\alpha,M}'(\tau)\right)^{\frac{1}{2}} \dd \tau.
\]
A direct calculation shows that
\begin{equation}\label{Des6}
G_{\alpha,M}(t) \geq \frac{2}{\alpha + 1} t \left(\min\{t,M\}\right)^{\frac{\alpha-1}{2}}, \quad \text{for all} \quad t \in \mathbb{R}.
\end{equation}

Moreover,
\begin{equation}\label{Des7}
\vert G_{\alpha,M}(a) -G_{\alpha,M}(b) \vert^{2} \leq \left(g_{\alpha,M}(a) - g_{\alpha,M}(b)\right)\left(a-b\right) , \,\,\, \forall a,b \in \mathbb{R}.
\end{equation}
It follows from \eqref{Des7} 
that
\begin{align*}
[G_{\alpha,M}(u)]_{s,2}^{2} &:= \frac{C_{N,s}\varepsilon^{2s}}{2}\int_{\Omega} \int_{\Omega} \frac{\vert G_{\alpha,M}(u(x))-G_{\alpha,M}(u(y))\vert^{2}}{\vert x-y \vert^{N+2s}} \dd x \dd y \\
&\leq \frac{C_{N,s}\varepsilon^{2s}}{2}\iint_{\mathbb{R}^{2N} \backslash \left(\Omega^{c}\right)^{2}} \frac{\left(u(x)-u(y)\right)\left( g_{\alpha,M}(u(x))-g_{\alpha,M}(u(y))\right)}{\vert x-y \vert^{N+2s}} \dd x \dd y
\end{align*}
Lemma \ref{DS} (i.e., the Sobolev inequality) and \eqref{SF} yield
\begin{align*}
S^{-2} \varepsilon^{2s}\left(\int_{\Omega} \vert G_{\alpha,M}(u) \vert^{2_{s}^{*}} \dd x\right)^{\frac{2}{2_{s}^{*}}} &\leq  \left[G_{\alpha,M}(u)\right]_{s,2}^{2} + \int_{\Omega}\vert G_{\alpha,M}(u) \vert^{2} \dd x \\
& \leq \int_{\Omega} f(u)\,g_{\alpha,M}(u) \dd x= \int_{\Omega} f(u)\,u\,u_{M}^{\alpha-1} \dd x.
\end{align*}

Combining \eqref{Des6} with the last inequality, we obtain
\begin{align}\label{Des8}
S^{-2} \varepsilon^{2s}\left[\left(\frac{2}{\alpha + 1}\right)^{2_{s}^{*}}  \int_{\Omega} \left\vert u\,u_{M}^{\frac{\alpha-1}{2}} \right\vert^{2_{s}^{*}} \dd x \right]^{\frac{2}{2^*_s}}&\leq S^{-2} \varepsilon^{2s}\left(\int_{\Omega} \left\vert G_{\alpha,M}(u) \right\vert^{2_{s}^{*}} \dd x\right)^{\frac{2}{2_{s}^{*}}}\nonumber \\
&\leq \int_{\Omega} f(u)\,u\,u_{M}^{\alpha-1} \dd x.
\end{align}

According to Remark \ref{obs1}, for any fixed $C_\eta>0$, there exists $\eta>0$ such that 
\begin{equation}\label{Des9}
\vert f(t) \vert \leq  \eta\vert t \vert +C_\eta\vert t \vert^{2_{s}^{*}-1} + , \quad \forall t \in \mathbb{R}.
\end{equation}

Applying \eqref{Des9} and the Hölder inequality, we estimate the right-hand of \eqref{Des8}. 
\begin{align*}
\int_{\Omega} f(u)\,u\,u_{M}^{\alpha-1} \dd x &\leq \eta \int_{\Omega}u^{2}\,u_{M}^{\alpha - 1} \dd x+C_\eta \int_{\Omega} u^{2_{s}^{*}}\,u_{M}^{\alpha - 1} \dd x  \\
&=\eta \int_{\Omega}u^{2}\,u_{M}^{\alpha - 1} \dd x +C_\eta \int_{\Omega} u^{2_{s}^{*}-2}\,u^{2}\,u_{M}^{\alpha - 1} \dd x \\
&= \eta \int_{\Omega}u^{2}\,u_{M}^{\alpha - 1} \dd x+C_\eta \int_{\Omega} u^{\frac{4s}{N-2s}} \left\vert u\,u_{M}^{\frac{\alpha - 1}{2}}\right\vert^{2} \dd x\\
&\leq \eta \int_{\Omega}u^{2}\,u_{M}^{\alpha - 1} \dd x+C_\eta \left\Vert u \right\Vert_{2_{s}^{*}}^{\frac{4s}{N-2s}} \left(\int_{\Omega} \left\vert u\,u_{M}^{\frac{\alpha - 1}{2}} \right\vert^{2_{s}^{*}} \dd x\right)^{\frac{N-2s}{N}}.
\end{align*}
Choosing $C_\eta >0$ small enough such that
\[
C_\eta \left\Vert u \right\Vert_{2_{s}^{*}}^{\frac{4s}{N-2s}} \leq \frac{S^{-2}\varepsilon^{2s}}{2} \left(\frac{2}{\alpha +1 }\right)^{2},
\]
we conclude that
\[\int_\Omega f(u)uu^{\alpha-1}_M\dd x\leq \eta \int_\Omega u^2u^{\alpha-1}_M+\frac{S^{-2}e^{2s}}{2}\left(\frac{2}{\alpha+1}\right)\left(\int_\Omega \left|uu^{\frac{\alpha-1}{2}}\right|^{2^*_s}\right)^{\frac{2}{2^*_s}}.\]	
Combining with \eqref{Des8} yields
\[
\frac{S^{-2}\varepsilon^{2s}}{2}\left(\frac{2}{\alpha +1 }\right)^{2}\left(\int_{\Omega} \left\vert u\,u_{M}^{\frac{\alpha - 1}{2}} \right\vert^{2_{s}^{*}} \dd x\right)^{\frac{2}{2_{s}^{*}}} \leq
\eta \int_{\Omega}u^{2}\,u_{M}^{\alpha - 1} \dd x.
\]

Thus, for a positive constant $C$,
\begin{align*}
\left(\int_{\Omega} \left\vert u\,u_{M}^{\frac{\alpha - 1}{2}} \right\vert^{2_{s}^{*}} \dd x\right)^{\frac{2}{2_{s}^{*}}} \leq C(\alpha + 1)^{2}  \int_{\Omega}u^{2}\,u_{M}^{\alpha - 1} \dd x.
\end{align*}

Making $M \to \infty$, Fatou's lemma and the dominate convergence theorem yield
\[
\left\Vert u \right\Vert_{2_{s}^{*}\left(\frac{\alpha+1}{2}\right)}^{\alpha +1} \leq C(\alpha +1 )^{2} \left\Vert u \right\Vert_{2\left(\frac{\alpha + 1}{2}\right)}^{\alpha + 1}
\]
and taking $\beta = \frac{\alpha + 1}{2}$, we obtain
\begin{equation}\label{Des10}
\left\Vert u \right\Vert_{2_{s}^{*}\beta}^{2 \beta} \leq C\beta^{2} \left\Vert u \right\Vert_{2 \beta}^{2\beta}.
\end{equation}

Now, choose $K > 1$ such that $C^{\frac{1}{2}}\beta \leq K e^{\sqrt{\beta}}$.
Then, \eqref{Des10} can be written as
$\left\Vert u \right\Vert_{2_{s}^{*} \beta}^{\beta} \leq Ke^{\sqrt{\beta}} \left\Vert u \right\Vert_{2\beta}^{\beta}$.

Thus,
\begin{align}\label{Des11}
\left\Vert u \right\Vert_{2_{s}^{*} \beta}\leq K^{\frac{1}{\beta}}e^{\frac{1}{\sqrt{\beta}}} \left\Vert u \right\Vert_{2\beta}, \quad \text{for all} \quad \beta > 0. 
\end{align}

Consider the sequence defined by
\[
\beta_{1} = 1, \quad \beta_{n+1} = \left(\frac{2_{s}^{*}}{2}\right)\beta_{n} \quad \text{for} \quad n=\mathbb{N}=\{1,2,\ldots\}.
\]

Since $\frac{\beta_{n}}{\beta_{n+1}} =\frac{2}{2^*_s}<1$, the series 
\begin{equation}\label{Conv}
\sum_{n=0}^{\infty} \frac{1}{\beta_{n}} \quad \text{and} \quad \sum_{n=0}^{\infty} \frac{1}{\sqrt{\beta_{n}}}
\end{equation}
are both convergent.

Using the sequence $(\beta_{n})$ in \eqref{Des11} and iterating we obtain
\[\|u\|_{2^*_s}\beta_2\leq K^{\frac{1}{\beta_2}}e^{\frac{1}{\sqrt{\beta_2}}}\|u\|_{2^*_s}=K^{\frac{1}{1}+\frac{1}{\beta_2}}e^{\frac{1}{\sqrt{1}}+\frac{1}{\sqrt{\beta_2}}}\|u\|_2.\]
Proceeding repeatedly, yields
\[\left\Vert u \right\Vert_{2_{s}^{*} \beta_{n}}\leq K^{\left(\displaystyle\sum_{i=0}^{n} \frac{1}{\beta_{i}}\right)} e^{\left(\displaystyle\sum_{i=0}^{n} \frac{1}{\sqrt{\beta_{i}}}\right)} \left\Vert u \right\Vert_{2}.\]

Making $n\to\infty$, we conclude that, for positive constants $\gamma_1$ and $\gamma_{2}$
\[\left\Vert u \right\Vert_{\infty} \leq K^{\gamma_1}e^{\gamma_{2}} \left\Vert u \right\Vert_{2} < \infty,
\]
%
%
that is, $u \in L^{\infty}(\Omega)$. 
$\hfill\Box$

\noindent \textbf{Acknowledgements:} The authors thank Gilberto A. Pereira for useful conversations. 
\bibliographystyle{plain}
\bibliography{biblio_jef}
\end{document}